\newcommand{\f}{\frac}
\newcommand{\ds}{\displaystyle}
 \newtheorem{thm}{Theorem}[section]
 \newtheorem{cor}[thm]{Corollary}
 \newtheorem{lem}[thm]{Lemma}
 \newtheorem{prop}[thm]{Proposition}
 \theoremstyle{remark}
 \newtheorem{rem}[thm]{Remark}
\newtheorem{oq}[thm]{Open Question}
 \numberwithin{equation}{section}
\begin{document}

\title[Some considerations on the nonabelian tensor square of crystallographic groups]
 {Some considerations on the nonabelian tensor square of crystallographic groups}
\author[A. Erfanian]{Ahmad Erfanian}

\address{Mathematics Department and \\ Centre of Excellence in Analysis
on Algebraic Structures\\ Ferdowsi University of Mashhad\\
P.O.Box 1159, 91775, Mashhad, Iran} \email{erfanian@math.um.ac.ir}



\author[F. G. Russo]{Francesco G. Russo}
\address{Laboratorio di Dinamica e Geotecnica - Strega\\
Universit\'a del Molise\\
via Duca degli Abruzzi, Termoli (CB)} \email{francescog.russo@yahoo.com}

\author[N.H. Sarmin]{Nor Haniza Sarmin}

\address{Department of Mathematics, Faculty of Science\\
Universiti Teknologi Malaysia\\
81310 UTM Johor Bahru, Johor, Malaysia} \email{nhs@utm.my}

\thanks{\textit{Mathematics Subject Classification 2010}: 20J99, 20F18.}

\keywords{Hirsch length, Schur multiplier, crystallographic groups,
pro--$p$--groups of finite coclass, Bieberbach groups.}

\date{\today}
\dedicatory{}

\begin{abstract}
The nonabelian tensor square $G\otimes G$ of a polycyclic group $G$ is  a polycyclic group and its structure
arouses interest in many contexts. The same assertion is still true for wider classes of solvable groups. This
motivated us to work on two levels in the present paper: on a hand, we investigate the growth of the Hirsch
length of $G\otimes G$ by looking at that of $G$, on another hand, we study the nonabelian tensor product of
pro--$p$--groups of finite coclass, which are a remarkable class of  solvable groups without center, and then we
do considerations on their Hirsch length. Among other results, restrictions on the Schur multiplier will be
discussed.
\end{abstract}

\maketitle

\section{Introduction}

The $nonabelian$ $tensor$ $square$ $G\otimes G$ of a group $G$ is the group generated by the symbols $x\otimes
y$ and subject to the relations \begin{equation} xy\otimes z=(^xy\otimes \hspace{.04cm} ^xz)(x\otimes z) \ \ \
\textrm{and} \ \ \ x\otimes zt=(x\otimes z)\ (^zx\otimes\hspace{.04cm}^zt )\end{equation}  for all $x,y,z,t\in
G,$ where $G$ acts on itself via conjugation $^xy=xyx^{-1}$. In particular, if $G$ is abelian and acts trivially
on itself, we have the usual abelian tensor product $G \otimes_\mathbb{Z} G$. The group $G\wedge G=G\otimes
G/\nabla(G)$ is called $nonabelian$ $exterior$ $square$ of $G$, where $\nabla(G)=\langle x\otimes x \ | \ x\in
G\rangle$. From \cite{bjr} the maps $\kappa: x\otimes y\in G\otimes G\longmapsto [x,y]\in [G,G]$ and $\kappa':
x\wedge y \in G\wedge G \longmapsto [x,y]\in [G,G]$ are epimorphisms and the topological meaning of
$\ker\kappa=J_2(G)$ is described in \cite{bl}. Still from \cite{bjr} the following diagram is commutative with
exact rows and central extensions as columns:
\begin{equation}\label{e:graph1}
\begin{CD}
@. @.  1 @. 1\\
@. @. @VVV   @VVV\\
H_3(G)@>>>\Gamma(G^{ab}) @>\psi>>J_2(G)@>>>H_2(G)@>>>1\\
@| @| @VVV @VVV\\
H_3(G)@>>>\Gamma(G^{ab}) @>\psi>>G\otimes G @>>>G\wedge G@>>>1 @. \\
@. @. @V\kappa VV   @V\kappa'VV\\
@. @. [G,G] @= [G,G]\\
@. @. @VVV   @VVV\\
@. @. 1 @. 1\\
\end{CD}
\end{equation}
where $H_2(G)=\ker \kappa'$ is the second integral homology group of $G$, $H_3(G)$ is the third integral
homology group of $G$ and $\Gamma$ is the Whitehead's quadratic functor in \cite[Section 2]{bjr}. We note that
$H_2(G)$ is exactly the \textit{Schur multiplier} $M(G)$ of $G$.

After the initial work \cite{d}, many authors investigated the structure of $G\otimes G$ by looking at that of
$G$ and in the last times there is a significant production which is devoted to the classes $\mathfrak{P}$ of
all polycyclic groups, $\mathfrak{F}$ of all finite groups  and $\mathfrak{S}$ of all solvable groups (see
\cite{bfm, bm, en, m, visscher}). In a solvable group $G$ we recall that the number of infinite cyclic factors
$h(G)$ is an invariant, called \textit{Hirsch length}, or \textit{torsion--free rank}, of $G$ (see \cite[pp.14,
15, 16, 85]{lr}). If $G\in \mathfrak{P}$, we have $h(G)=0$ if and only if $G \in \mathfrak{P}\cap \mathfrak{F}$.
Now, if $G$ is abelian, then $G \otimes G$ is abelian by \cite[Theorem 3.1]{visscher}; if $G\in \mathfrak{P}$,
then $G\otimes G\in \mathfrak{P}$ (see \cite{bm, en, m}) and, so far as we know, the structure of $G\otimes G$
is widely described in terms of the upper central series of $G$. For instance, \cite{ksv} classifies $G\otimes
G$, when $G$ is a 2--generator 2--group, and so, $G$ is a particular type of polycyclic group with nontrivial
center. \cite{niza} describes $G\otimes G$, where $G$ is an infinite nonabelian 2--generator nilpotent group of
class 2, and so, $G$ is still a polycyclic group with nontrivial center. There are several contributions on this
line of research but it is hard to find information on $G\otimes G$ when $G$ is a polycyclic group with trivial
center: we found the initial idea in \cite{bk} and a recent interest in \cite{bfm,en,eick,rohaidah}.

The aim of the present work is to detect the structure of $G\otimes
G$, when $G$ is a polycyclic group with trivial center, or more
generally an infinite solvable group with trivial center, starting
from bounds on $h(G\otimes G)$ and $h(G)$. The absence of literature
on such a line of investigation has motivated us to write the
present paper. On another hand, R. F. Morse  has kindly pointed out
(see \cite{bob}) that the same question was posed by C. Rover at the
Conference on Computational Group Theory and Cohomology at the
Harlaxton College (Harlaxton Lincolnshire, UK) in 2008. We end this
introduction, noting that the terminology and the notations of the
present paper are standard and can be found in \cite{bjr, bl, d,
ellis2}.

\section{The growth of the Hirsch length in the nonabelian tensor square}

The following (unpublished) lemma was communicated to us by D.
Ramras and describes some classical situations, which we may
encounter, when we deal with the presentations of polycyclic groups.
Further details can be found in \cite{ramras}.

\begin{lem}\label{l:ramras}Let $l,p,k,m,n_1, n_2, \ldots ,n_m$ be integers.
Consider an extension of groups $1 \rightarrow A \rightarrowtail
\Gamma \twoheadrightarrow Q \rightarrow 1$ in which $A$ is a
finitely generated abelian group and $Q$ is finite. If
\begin{equation}Q =\langle q_1,\ldots,q_l \ | \ r_1(q_1,\ldots,q_l)
= \ldots = r_p(q_1,\ldots,q_l) = 1 \rangle\end{equation} for some
words $r_1,\ldots,r_p$ in the free group on $l$ letters and
\begin{equation}A=\langle a_1,\ldots,a_{k+m} \ | \ [a_i,a_j] = 1 \
(1\le i\le j \le k+m), \ a^{n_1}_1=\ldots =a^{n_m}_m=1 \ (1\le
n_1\leq \ldots\leq n_m)\rangle,\end{equation} then for some words
$w_i$ and $u_{ij}$ (not uniquely determined) in the free group on $k
+ m$ letters,
\begin{equation}\Gamma= \langle \alpha_1,\ldots,\alpha_{k+m}, \gamma_1, \ldots, \gamma_l \ | \
r_1(\gamma_1,\ldots, \gamma_l) = w_1(\alpha_1,\ldots,\alpha_{k+m}),
\ldots, \end{equation}\begin{equation}r_p(\gamma_1,\ldots, \gamma_l)
= w_p(\alpha_1,\ldots,\alpha_{k+m}), [\alpha_i,\alpha_j]=1,
\alpha^{n_j}_j=1, \gamma_i
\alpha_j\gamma_i^{-1}=u_{ij}(\alpha_1,\ldots, \alpha_{k+m}),
\end{equation}\begin{equation}\ (1\le i\le j \le k+m)
\rangle.\end{equation}
\end{lem}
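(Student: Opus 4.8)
The plan is to exhibit the group $\Gamma_0$ defined by the displayed presentation as an extension of $Q$ by a copy of $A$, and then to compare it with $\Gamma$ directly. First I would fix the data: choose lifts $\gamma_1,\dots,\gamma_l\in\Gamma$ of $q_1,\dots,q_l$, identify $A$ with its image in $\Gamma$, and let $\alpha_1,\dots,\alpha_{k+m}$ denote the generators $a_1,\dots,a_{k+m}$ of $A$ viewed inside $\Gamma$. Since $A$ is abelian with the stated torsion, $[\alpha_i,\alpha_j]=1$ and $\alpha_j^{n_j}=1$ hold in $\Gamma$; since $A\trianglelefteq\Gamma$, each $\gamma_i\alpha_j\gamma_i^{-1}$ lies in $A$, hence equals some word $u_{ij}(\alpha_1,\dots,\alpha_{k+m})$; and since $r_j(q_1,\dots,q_l)=1$ in $Q$, the element $r_j(\gamma_1,\dots,\gamma_l)$ lies in $\ker(\Gamma\twoheadrightarrow Q)=A$, hence equals some word $w_j(\alpha_1,\dots,\alpha_{k+m})$. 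These furnish the non-canonical $w_j$ and $u_{ij}$ of the statement; let $\Gamma_0$ be the group presented as in the conclusion with them. By von Dyck's theorem the named elements of $\Gamma$ satisfy all defining relations of $\Gamma_0$, so there is a homomorphism $\phi\colon\Gamma_0\to\Gamma$ carrying each generator to the element of the same name, and $\phi$ is onto because its image contains $A$ and surjects onto $Q$.

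Next I would analyse $\bar A:=\langle\alpha_1,\dots,\alpha_{k+m}\rangle\leq\Gamma_0$. As the abelian and torsion relations of $A$ occur among the defining relations of $\Gamma_0$, the rule $a_j\mapsto\alpha_j$ gives a surjection $\eta\colon A\twoheadrightarrow\bar A$; and the composite $\phi\circ\eta\colon A\to A$ fixes each $a_j$, hence is $\mathrm{id}_A$, so $\eta$ is injective, hence an isomorphism $\eta\colon A\xrightarrow{\ \sim\ }\bar A$ with inverse $\phi|_{\bar A}$. The crux — and the step I expect to be the real obstacle — is to prove $\bar A\trianglelefteq\Gamma_0$, i.e.\ that the conjugation relations $\gamma_i^{-1}\alpha_j\gamma_i\in\bar A$, which are \emph{not} among those listed, already follow. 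The relation $\gamma_i\alpha_j\gamma_i^{-1}=u_{ij}(\alpha_1,\dots,\alpha_{k+m})$ shows that conjugation by $\gamma_i$ maps $\bar A$ into $\bar A$ and so restricts to an endomorphism $c_i$ of $\bar A$; but $\eta^{-1}c_i\eta$ is the endomorphism $a_j\mapsto u_{ij}(a_1,\dots,a_{k+m})=\gamma_i a_j\gamma_i^{-1}$ of $A$, which is an \emph{automorphism} precisely because $A$ is normal in $\Gamma$. Hence $c_i$ is surjective, $\gamma_i\bar A\gamma_i^{-1}=\bar A$, and therefore every $\gamma_i$, every $\gamma_i^{-1}$, and (trivially) every $\alpha_j$ normalizes $\bar A$; as these generate $\Gamma_0$, $\bar A$ is normal.

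To conclude, I would adjoin the relations $\alpha_j=1$ to the presentation of $\Gamma_0$ (legitimate now that $\bar A$ is known normal) and delete the redundant generators: the relations $r_j(\gamma_1,\dots,\gamma_l)=w_j(\alpha_1,\dots,\alpha_{k+m})$ become $r_j(\gamma_1,\dots,\gamma_l)=1$ and all the others become trivial, so $\Gamma_0/\bar A\cong\langle\gamma_1,\dots,\gamma_l\mid r_1=\dots=r_p=1\rangle=Q$ via $\gamma_i\mapsto q_i$. Thus $1\to\bar A\to\Gamma_0\to Q\to1$ is exact, and the composite $\Gamma_0\xrightarrow{\ \phi\ }\Gamma\twoheadrightarrow Q$ agrees on generators with the quotient map $\Gamma_0\to\Gamma_0/\bar A\cong Q$. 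Consequently $\ker\phi\subseteq\bar A$, whence $\ker\phi=\ker\phi\cap\bar A=\ker(\phi|_{\bar A})=1$; combined with surjectivity, $\phi$ is an isomorphism, which is the assertion. (Alternatively, one applies the five lemma to the ladder between the two extensions, whose vertical maps are $\mathrm{id}_Q$, $\phi$, and the isomorphism $\eta$.)
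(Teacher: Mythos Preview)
Your proof is correct and follows essentially the same architecture as the paper's: choose lifts to define the words $w_j,u_{ij}$, build the presented group $\Gamma_0$, construct the obvious surjection $\phi\colon\Gamma_0\to\Gamma$, identify $\bar A\cong A$ and $\Gamma_0/\bar A\cong Q$, and conclude by the five lemma. Two points of contrast are worth noting. First, you correctly flag and resolve the normality of $\bar A$: the listed relations give only $\gamma_i\bar A\gamma_i^{-1}\subseteq\bar A$, and you supply the missing surjectivity by transporting the endomorphism back to $A$ via $\eta$, where it is visibly an automorphism; the paper simply asserts that ``the third set of relations ensures that $\widetilde A$ is normal,'' so your treatment is more careful here. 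Second, the paper obtains $\Gamma_0/\bar A\cong Q$ by a cardinality squeeze (there is a surjection $Q\twoheadrightarrow\Gamma_0/\bar A$ and also $\Gamma_0/\bar A\twoheadrightarrow Q$, with $Q$ finite), whereas you read it off directly from the presentation by Tietze moves; your argument does not use the finiteness of $Q$ at that step, though finiteness is of course still implicit in the normality argument via the automorphism of $A$.
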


\begin{proof}
To begin, we must specify the words $u_{ij}$ and $w_i$. Choose
elements $\widetilde{q}_i \in \Gamma$ lying over $q_i \in Q$. Since
$A$ is normal in $\Gamma $, we know that
$\widetilde{q}_ia_j\widetilde{q}^{-1}_i \in A$, and hence
$\widetilde{q}_ia_j\widetilde{q}^{-1}_i=u_{ij}(a_1,\ldots a_{k+m})$
for some word $u_{ij}$. Next, since $r_i(q_1,\ldots,q_l) = 1$ in
$Q$, we know that $r_i(\widetilde{q}_1,\ldots, \ldots{q}_l)\in A$,
and hence $r_i(q_1,\ldots,q_l) = w_i(a_1,\ldots,a_{k+m})$ for some
word $w_i$. Now, let $\widetilde{\Gamma}$ denote the group presented
by (2.3)--(2.5), and let $\widetilde{A}$ denote the subgroup
generated by $\alpha_1,\ldots, \alpha_{k+m}$. Let $\Phi
:\widetilde{\Gamma}\rightarrow \Gamma$ be the homomorphism defined
by $\Phi(\alpha_i) = a_i$ and $\Phi(\gamma_i) = \widetilde{q}_i$.
Then $\Phi$ is surjective, and its restriction to $\widetilde{A}$ is
a surjection onto $A \le \Gamma$. The third set of relations in
(2.3)--(2.5) ensures that $\widetilde{A}$ is normal in
$\widetilde{\Gamma}$, and we define $\widetilde{Q} =
\widetilde{\Gamma}/\widetilde{A}$. The map $\Phi$ induces a
surjection $\widetilde{\Phi}: \widetilde{Q} \twoheadrightarrow Q$,
and we have a commutative diagram
\begin{equation}\label{e:graphramras}
\begin{CD}
\ 1 @>>>\widetilde{A} @>>>\widetilde{\Gamma}@>>>\widetilde{Q}@>>>1\\
@. @VVV @V\Phi VV @V\widetilde{\Phi} VV\\
\ 1 @>>>A @>>>\Gamma@>>>Q@>>>1\\
\end{CD}
\end{equation}
The map $\widetilde{\Gamma}\rightarrow \widetilde{Q}$ induces a
surjection from the free group on the generators $\gamma_i$ onto
$\widetilde{Q}$, and this surjection factors through the quotient
group $\langle \gamma_1, \ldots \gamma_l \ | \  r_i(
\gamma_1,\ldots, \gamma_l) = 1 \rangle \simeq Q$. Hence we have a
surjection $Q \twoheadrightarrow \widetilde{Q}$, meaning that
$\widetilde{Q}$ is a finite group of order at most $|Q|$. The
existence of the surjection $\widetilde{\Phi} : \widetilde{Q}
\twoheadrightarrow Q$ now shows that both of these surjections must
in fact be isomorphisms. Next, we show that the map $\widetilde{A}
\rightarrow  A$ is injective. Each element $\alpha \in
\widetilde{A}$ has the form $\alpha^{p_1}_1 \alpha^{p_2}_2 \ldots
\alpha^{p_{k+m}}_{k+m}$ for some integers $p_i$. Our presentation
for $A$ shows that, if $\Phi( \alpha) = 0$, then $p_i$ is a multiple
of $n_i$ for $1\le i \le m$, and $p_i = 0$ for $i > m$. But such
elements are already trivial in $\widetilde{\Gamma}$, so $\phi$ is
injective when restricted to $\widetilde{A}$. We have now shown that
the two outer maps in (\ref{e:graphramras}) are isomorphisms, and
the 5-lemma shows that $\Phi$ is an isomorphism as well.
\end{proof}

Lemma \ref{l:ramras} can be specialized in various ways. For instance, assume that the cyclic group $C_n=\langle
t \ | \ t^n=1\rangle$ of order $n>1$ is equal to $Q$;  the free abelian group
$\mathbb{Z}^{n-1}=\underbrace{\mathbb{Z} \times \ldots \times \mathbb{Z}}_{(n-1)-\textrm{times}}=\langle
a_1,\ldots,a_{n-1} \ | \ [a_i,a_j]=1; 1\leq i,j \leq n-1\rangle $ of rank $n-1$ is equal to $A$; $C_n$ acts on
$\mathbb{Z}^{n-1}$ via the following homomorphism
\begin{equation}\label{1tris}\xi : t\in C_n \mapsto
\xi(t)=\left( \begin{array}{ccccccc}
0 & 1 & 0 \ldots & 0 & 0\\
0 & 0 & 1 \ldots & 0 & 0\\
\ldots & \ldots &  \ldots & \ldots & \ldots\\
0 & 0 & 0 \ldots & 0 & 1\\
-1 & -1 & -1 \ldots & -1 & -1\\
\end{array} \right)\in GL_{n-1}(\mathbb{Z}).\end{equation}
We have the \textit{crystallographic group} $G_n=C_n\ltimes \mathbb{Z}^{n-1}$ $of$ $holonomy$ $n$ and several
information on  it can be found in \cite[\S 6.3]{en}, or \cite[Proposition 3.3]{agpp}.
 Looking at its construction,  $G_n\in \mathfrak{P}$, $h(G_n)=n-1$,  $Z(G_n)=\{1\}$ and
$G_n$ is metabelian (in particular, $[G_n,G_n]$ is abelian). On another hand, we may get a presentation for
$G_n$, taking a generating set for $C_n$, another for $\mathbb{Z}^{n-1}$ and considering the action
(\ref{1tris}). We have as follows.

\begin{cor}
$G_n=\langle a_1,\ldots,a_{n-1},t \ | \ t^n=1, t^{-1}a_it=a_{i+1} \ (1 \le i \le n-2), t^{-1}a_{n-1}t= a_1^{-1}
\ldots a_{n-1}^{-1}, [a_i,a_j]=1 \ (1\le j<i \le n-1) \rangle.$
\end{cor}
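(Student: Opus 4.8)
The plan is to obtain the presentation of $G_n$ directly from Lemma~\ref{l:ramras} by substituting the specific data of the split extension $1 \to \mathbb{Z}^{n-1} \to G_n \to C_n \to 1$. Concretely, I would take $Q = C_n = \langle t \mid t^n = 1\rangle$, so that $l = 1$, $p = 1$, $q_1 = t$ and $r_1(q_1) = q_1^n$; and I would take $A = \mathbb{Z}^{n-1} = \langle a_1, \dots, a_{n-1} \mid [a_i,a_j] = 1\rangle$, which is free abelian of rank $n-1$, so in the notation of the lemma $k = n-1$, $m = 0$, and there are no torsion relations $a_j^{n_j} = 1$ to carry along. Then Lemma~\ref{l:ramras} produces generators $\alpha_1, \dots, \alpha_{n-1}, \gamma_1$ (which I rename $a_1, \dots, a_{n-1}, t$) subject to $r_1(\gamma_1) = w_1(\alpha_1, \dots, \alpha_{n-1})$, the commuting relations $[\alpha_i,\alpha_j] = 1$, and the conjugation relations $\gamma_1 \alpha_j \gamma_1^{-1} = u_{1j}(\alpha_1, \dots, \alpha_{n-1})$.

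The next step is to pin down the words $w_1$ and $u_{1j}$ using the fact that the extension \emph{splits} and the action is exactly $\xi$ of~(\ref{1tris}). Because the extension is split, we may choose the lift $\widetilde{q}_1 = t$ to be an actual element of order $n$ in $G_n$; hence $r_1(\widetilde{q}_1) = t^n = 1$ in $G_n$, which forces $w_1$ to be the trivial word, giving the relation $t^n = 1$. For the conjugation relations, the action of $t$ on $\mathbb{Z}^{n-1}$ in the semidirect product is precisely multiplication by the matrix $\xi(t)$ from~(\ref{1tris}); reading off its columns (or rows, depending on the convention fixed for how $GL_{n-1}(\mathbb{Z})$ acts on column vectors), the image of the basis vector $a_i$ under $\xi(t)$ is $a_{i+1}$ for $1 \le i \le n-2$, and the image of $a_{n-1}$ is $a_1^{-1} a_2^{-1} \cdots a_{n-1}^{-1}$ (coming from the last column of all $-1$'s). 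Thus $u_{1i}(a_1,\dots,a_{n-1}) = a_{i+1}$ for $1 \le i \le n-2$ and $u_{1,n-1}(a_1,\dots,a_{n-1}) = a_1^{-1}\cdots a_{n-1}^{-1}$, which translate into $t^{-1} a_i t = a_{i+1}$ and $t^{-1} a_{n-1} t = a_1^{-1}\cdots a_{n-1}^{-1}$ respectively. Collecting these with the commuting relations $[a_i,a_j] = 1$ (which we need only record for $1 \le j < i \le n-1$, the rest being consequences) yields exactly the claimed presentation, and the consistency (i.e. that no collapse occurs) is guaranteed by Lemma~\ref{l:ramras} itself since $G_n$ was built as the genuine semidirect product $C_n \ltimes \mathbb{Z}^{n-1}$.

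The only real point requiring care — and the step I expect to be the main (minor) obstacle — is the bookkeeping of conventions: one must be consistent about whether $t$ acts as $\xi(t)$ or $\xi(t)^{-1} = \xi(t^{-1})$, about left versus right actions, and about whether the lemma's conjugation relation $\gamma_i \alpha_j \gamma_i^{-1} = u_{ij}$ matches the relation $t^{-1} a_i t = a_{i+1}$ stated in the corollary; these differ by passing from $\xi(t)$ to its inverse, and one must verify that $\xi(t)^{-1}$ sends $a_{i+1} \mapsto a_i$ and $a_1 \mapsto a_1^{-1}\cdots a_{n-1}^{-1}$, equivalently that the relations as written are simply the lemma's relations rearranged. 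Once the convention is fixed so that the matrix~(\ref{1tris}) describes the action appearing in the presentation, the verification is a direct substitution into Lemma~\ref{l:ramras} with no further computation, and the isomorphism claim in that lemma closes the argument.
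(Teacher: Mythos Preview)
Your proposal is correct and follows exactly the approach the paper takes: the paper states the corollary without a formal proof, simply remarking beforehand that one specializes Lemma~\ref{l:ramras} with $Q=C_n$, $A=\mathbb{Z}^{n-1}$, and the action~(\ref{1tris}), which is precisely what you carry out in detail. Your attention to the splitting (forcing $w_1=1$) and to the $t^{-1}a_it$ versus $ta_it^{-1}$ convention is more explicit than anything the paper says, but the underlying argument is identical.
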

We can be more accurate in the description of $[G_n,G_n]$ and of the abelianization $G^{ab}_n=G_n/[G_n,G_n]$. In
fact $[G_n,G_n]$ is generated by the commutators of generators of $G_n$ and their inverses. Since all the $a_i$
commute and $t$ has finite order, one has only to consider commutators of the form $[t,a_i]$  and thus
\begin{equation}[G_n,G_n]=\langle a^{-1}_ia_{i+1}, a^{-1}_1\ldots a^{-1}_{n-2}a^{-2}_{n-1} \ |\end{equation}\begin{equation} \ [a^{-1}_ia_{i+1},a^{-1}_ja_{j+1}]=[a^{-1}_ia_{i+1},a^{-1}_1\ldots a^{-1}_{n-2}a^{-2}_{n-1}]=1 \ (1 \le i,j \le
n-2)\rangle.
\end{equation}
We note that $[G_n,G_n]$ is free abelian of rank $n-1$. On another hand, when we factor $G_n$ through
$[G_n,G_n]$, we have that $t$ is an independent generator and $a_1=a_2= \ldots=a_{n-1}$. So
$a_{n-1}=a_{n-1}^{-(n-1)}$ which implies $a^n_{n-1}=1$ and $a_{n-1}$ is a second independent generator. We
conclude that $G^{ab}_n=C_n \times C_n$.

On another hand,  if $G_n$ has $n=p^s$ ($p$  prime and $s\geq1$) and we replace $\mathbb{Z}^{p-1}$ with
$\mathbb{Z}^{d_s}_p$, where $\mathbb{Z}_p$ denotes the $group$ $of$ $p$--$adic$ $integers$ and
$d_s=p^{s-1}(p-1)$, then we have the $pro$--$p$--$group$ $K_s=C_{p^s}\ltimes \mathbb{Z}^{d_s}_p$ $of$ $finite$
$coclass$  $with$ $central$ $exponent$ $s$, studied in \cite{eick, charles}. This time we cannot apply Lemma
\ref{l:ramras}, but computational arguments are still true. We recall a result in this direction, to convenience
of the reader.

\begin{lem}[See \cite{eick},  Theorem 7]\label{l:eick} For an integer $i$ let $e_i=1$, if $p^{s-1}$ divides
$i-1$, and $e_i=0$, otherwise. Then $ K_s= \langle a_1,\ldots,a_{d_s},t \ | \ t^{p^s}=1, \
t^{-1}a_1t=a^{-1}_{d_s}, \ t^{-1}a_it=a_{i-1}a^{-e_i}_{d_s} \ (1<i\le d_s), [a_i,a_j]=1 \ (1\leq j<i\le
d_s)\rangle$. Furthermore, $M(K_s)\simeq \mathbb{Z}^{ \frac{d_s}{2}}_p$, unless $p=2$ and $s=1$ in which case
$M(K_s)=1$.
\end{lem}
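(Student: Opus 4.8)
For the presentation the plan is to follow the method of Lemma~\ref{l:ramras}, transported to the category of pro--$p$--groups. First I would realise the $C_{p^s}$--module $\mathbb{Z}_p^{d_s}$ as the ring of integers $\mathbb{Z}_p[\zeta]$, $\zeta=\zeta_{p^s}$, of the totally ramified cyclotomic extension $\mathbb{Q}_p(\zeta)/\mathbb{Q}_p$ of degree $d_s=p^{s-1}(p-1)$, with $t$ acting by multiplication by $\zeta$. In the $\mathbb{Z}_p$--basis that exhibits the companion matrix of $\Phi_{p^s}(x)=\sum_{j=0}^{p-1}x^{jp^{s-1}}$, the generator $t$ carries each basis vector to the next one, and the single wrap--around relation forced by $\Phi_{p^s}(\zeta)=0$ becomes $t^{-1}a_1t=a_{d_s}^{-1}$ together with the corrections $t^{-1}a_it=a_{i-1}a_{d_s}^{-e_i}$ $(1<i\le d_s)$, where the $e_i$ are the nonzero coefficients of $\Phi_{p^s}$, i.e. $e_i=1$ iff $p^{s-1}\mid i-1$. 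Writing $\widetilde K$ for the pro--$p$--group defined by these generators and relations, I would then check, exactly as in the proof of Lemma~\ref{l:ramras}, that the closed subgroup $\widetilde A=\overline{\langle a_1,\dots,a_{d_s}\rangle}$ is abelian and normal, that $\widetilde K/\widetilde A$ is a quotient of $C_{p^s}$ mapping onto $C_{p^s}$ (hence $\cong C_{p^s}$), and that $\widetilde A$, being abelian and topologically generated by $d_s$ elements, is a continuous quotient of $\mathbb{Z}_p^{d_s}$ that surjects onto $\mathbb{Z}_p^{d_s}$; since $\mathbb{Z}_p^{d_s}$ is Hopfian this forces $\widetilde A\cong\mathbb{Z}_p^{d_s}$, and the five lemma yields $\widetilde K\cong K_s$.

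For the Schur multiplier I would run the Lyndon--Hochschild--Serre spectral sequence $E^2_{ij}=H_i\bigl(C_{p^s},H_j(A)\bigr)\Rightarrow H_{i+j}(K_s)$ in continuous homology with $\mathbb{Z}_p$ coefficients for the extension $1\to A\to K_s\to C_{p^s}\to 1$, $A=\mathbb{Z}_p^{d_s}=\mathbb{Z}_p[\zeta]$. Since $A$ is a free abelian pro--$p$--group, $H_j(A)=\Lambda^j_{\mathbb{Z}_p}A$ with the $C_{p^s}$--action induced from the one on $A$, so the terms on the line $i+j=2$ are $H_2(C_{p^s},\mathbb{Z}_p)$, $H_1(C_{p^s},A)$ and $(\Lambda^2A)_{C_{p^s}}$. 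By the periodic resolution of a cyclic group the first vanishes because $\mathbb{Z}_p$ is torsion free, and the second vanishes because multiplication by $\zeta-1$ is injective on the integral domain $\mathbb{Z}_p[\zeta]$. The only differentials that can reach $E^2_{0,2}=(\Lambda^2A)_{C_{p^s}}$ are the $d_2$ from $H_2(C_{p^s},A)$ and the $d_3$ from $H_3(C_{p^s},\mathbb{Z}_p)$, and both source groups are finite; so once $(\Lambda^2A)_{C_{p^s}}$ is seen to be torsion free these differentials vanish and $M(K_s)=H_2(K_s)\cong(\Lambda^2A)_{C_{p^s}}$. The whole problem thus reduces to the explicit determination of this module of coinvariants.

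To compute it I would pass to the tensor square: $A\otimes_{\mathbb{Z}_p}A\cong R:=\mathbb{Z}_p[\zeta,\eta]/\bigl(\Phi_{p^s}(\zeta),\Phi_{p^s}(\eta)\bigr)$, with the diagonal $C_{p^s}$--action given by multiplication by $\zeta\eta$. Since $\zeta$ is a unit of $R$, imposing $\zeta\eta=1$ forces $\eta=\zeta^{-1}$ and makes the relation $\Phi_{p^s}(\eta)=0$ redundant, so $(A\otimes_{\mathbb{Z}_p}A)_{C_{p^s}}=R/(\zeta\eta-1)R\cong\mathbb{Z}_p[\zeta]=A$, the class of $a\otimes b$ being sent to $a\,\iota(b)$, with $\iota$ the complex conjugation $\zeta\mapsto\zeta^{-1}$. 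Applying the right exact coinvariants functor to $0\to\langle a\otimes a:a\in A\rangle\to A\otimes A\to\Lambda^2A\to 0$, the image of $\langle a\otimes a\rangle$ in $A$ is the $\mathbb{Z}_p$--submodule generated by the norms $a\,\iota(a)$; all of these lie in the fixed subring $A^+=A^{\langle\iota\rangle}$, and already $a=1$ and $a=1+\zeta^k$ produce $1$ and $\zeta^k+\zeta^{-k}$ for every $k$, which span $A^+$ over $\mathbb{Z}_p$. Hence $(\Lambda^2A)_{C_{p^s}}\cong A/A^+$; and since $A^+=A\cap\mathbb{Q}_p(\zeta)^+$ is a saturated $\mathbb{Z}_p$--submodule of $A$ of rank $d_s/2$, the quotient $A/A^+$ is $\mathbb{Z}_p$--free of rank $d_s/2$. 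This confirms the torsion freeness used above and gives $M(K_s)\cong\mathbb{Z}_p^{d_s/2}$; in the exceptional case $p=2$, $s=1$ one has $d_s=1$, $\iota=\mathrm{id}$, $A^+=A$, so $A/A^+=0$ and $M(K_1)=1$.

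The step I expect to be the real obstacle is this last one, the exact determination of $(\Lambda^2A)_{C_{p^s}}$: computing only its rank is the easy eigenvalue count (over $\overline{\mathbb{Q}}_p$ the eigenvalue $1$ of $\zeta$ on $\Lambda^2A$ occurs precisely for the $d_s/2$ inverse pairs of primitive $p^s$--th roots of unity), but identifying the module together with its torsion needs the reduction $(A\otimes_{\mathbb{Z}_p}A)_{C_{p^s}}\cong A$ above and the fact that the norms $a\,\iota(a)$ span the real subring $A^+$. A pleasant feature of this route is that it is uniform in $p$: when $p=2$ the module $A\otimes A$ does not split as $S^2A\oplus\Lambda^2A$, but working through the submodule $\langle a\otimes a\rangle$ avoids any case distinction.
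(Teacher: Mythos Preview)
The paper does not prove this lemma at all: it is quoted verbatim from Eick's paper (reference \cite{eick}, Theorem~7) and stated without argument, as the header ``See \cite{eick}, Theorem~7'' indicates. So there is no proof in the paper to compare against.

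Your argument is a correct, self-contained proof. The presentation part is the standard companion-matrix realisation of $\mathbb{Z}_p[\zeta_{p^s}]$ as a $C_{p^s}$-module, combined with the five-lemma verification exactly parallel to Lemma~\ref{l:ramras}; the only thing to be careful about is matching the indexing conventions so that the wrap-around column of the companion matrix of $\Phi_{p^s}$ produces precisely the stated relations, but this is bookkeeping. For the Schur multiplier your LHS spectral sequence computation is clean: the vanishing of $E^2_{2,0}$ and $E^2_{1,1}$ is immediate, the potential differentials $d_2\colon E^2_{2,1}\to E^2_{0,2}$ and $d_3\colon E^3_{3,0}\to E^3_{0,2}$ have finite source (indeed $H_2(C_{p^s},A)\cong A/(\zeta-1)A\cong\mathbb{F}_p$ and $H_3(C_{p^s},\mathbb{Z}_p)\cong\mathbb{Z}/p^s$), and your identification $(\Lambda^2A)_{C_{p^s}}\cong A/A^+$ via the isomorphism $(A\otimes A)_{C_{p^s}}\cong A$, $a\otimes b\mapsto a\,\iota(b)$, together with the polarization argument showing the norms span $A^+$, is both elegant and correct. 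The saturation of $A^+$ in $A$ then gives torsion-freeness and kills the differentials.

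Eick's original proof in \cite{eick} proceeds instead via Hopf's formula applied to the explicit presentation, computing $M(K_s)$ as $(R\cap[F,F])/[R,F]$ directly from the relators; your cyclotomic/spectral-sequence route is more conceptual and has the advantage, which you note, of handling the case $p=2$, $s=1$ uniformly without a separate computation.
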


We may use the above arguments in order to note that $K_s$ is a metabelian group with $h(K_s)=d_s$, $[K_s,K_s]
\simeq \mathbb{Z}_p^{d_s}$, $K^{ab}_s=C_{p^s} \times C_{p^s}$ and $Z(K_s)=\{1\}$. However, $K_s\not \in
\mathfrak{P}$, but $K_s\in \mathfrak{S}$.

B. Eick and W. Nickel \cite{en} have studied the nonabelian tensor square of $G_n$, when $n=p$.  For $p=2$ we
have the infinite dihedral group $G_2=D_\infty=\langle a,x \ | \ a^x=a^{-1}, x^2=1\rangle=C_2\ltimes
\mathbb{Z}$. Quoting \cite[Figure at p.943]{en}, the following list holds:
\begin{equation}\label{e:1}
h(G_2\otimes G_2)=h(G_2)=1,  h(G_3\otimes
G_3)-h(G_3)=3-2=1,\end{equation}\begin{equation} h(G_5\otimes
G_5)-h(G_5)=6-4=2, h(G_7\otimes G_7)-h(G_7)=9-6=3,
\ldots.\end{equation} With the help of GAP \cite{gap} one can see
that the same list is true when $s=1$, $p=2,3,5,7$ and we deal with
$K_2=C_2\ltimes \mathbb{Z}_2, K_3=C_3 \ltimes \mathbb{Z}^2_3,
K_5=C_5 \ltimes \mathbb{Z}^4_5, K_7=C_7\ltimes \mathbb{Z}^6_7$. Then
it would be interesting to detect the properties of the following
function from the set of the integers onto the set of the integers
\begin{equation}\label{e:2}f: h(S)\in \{h(S) \ | \ S \in \mathfrak{S}\} \mapsto f(h(S))=h(S\otimes S)-h(S).\end{equation}

\begin{rem}\label{r:1}
I. Nakaoka and M. Visscher show that $S\otimes S\in \mathfrak{S}$,
whenever $S\in \mathfrak{S}$ (see \cite{bm,m,visscher}) and so $f$
is well--posed. On another hand, G. Ellis \cite{ellis1} and P.
Moravec \cite{m} show that $F\otimes F\in \mathfrak{F}\cap
\mathfrak{P}$, whenever $F\in \mathfrak{F}\cap \mathfrak{P}$. Then
$0=h(C_2)\mapsto f(0)=0$, or more generally, $0=h(F)\mapsto f(0)=0$
for all $F\in \mathfrak{F}\cap \mathfrak{P}$, but also
$1=h(G_2)\mapsto f(1)=0$. Hence $f$ is not injective. In fact
$N(f)=\{h(S) \ | \ f(h(S))=0\}=\{h(S\otimes S)=h(S) \ | \ S\in
\mathfrak{S} \}$. Finally, one can note that $f$ is neither additive
nor multiplicative.
\end{rem}



The next property of the Hirsch length is well--known.

\begin{lem}[See \cite{lr}, \S 1.3]\label{l:1}
If $A,B \in \mathfrak{S}$ and $\varphi: A \rightarrow B$ is a homomorphism of groups, then
$h(A)=h(\varphi(A))+h(\ker \varphi)$. In particular, the Hirsch length is additive on the extensions.
\end{lem}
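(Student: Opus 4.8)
The plan is to prove the displayed identity directly from the definition of $h$ by splicing together subnormal series, and then to read off the additivity on extensions as the special case coming from a quotient map. Recall that for $G \in \mathfrak{S}$ one picks a subnormal series $G = G_0 \trianglerighteq G_1 \trianglerighteq \cdots \trianglerighteq G_n = 1$ with abelian factors $G_i/G_{i+1}$ and sets $h(G) = \sum_{i=0}^{n-1} r_0(G_i/G_{i+1})$, where $r_0(T) = \dim_{\mathbb{Q}}(T \otimes_{\mathbb{Z}} \mathbb{Q})$ denotes the torsion-free rank of an abelian group $T$. The first point to settle is that this number does not depend on the chosen series. Given two such series, the Schreier refinement theorem furnishes refinements of both whose lists of successive factors coincide up to order and isomorphism. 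Since $\mathbb{Q}$ is flat over $\mathbb{Z}$, the functor $- \otimes_{\mathbb{Z}} \mathbb{Q}$ is exact on abelian groups, so $r_0$ is additive on short exact sequences of abelian groups; hence inserting one new term into a subnormal series leaves $\sum r_0$ unchanged, and therefore so does passing to any refinement. Thus the two given series and their common refinement all yield the same value of $\sum r_0$, and $h(G)$ is well defined.

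Now let $\varphi : A \to B$ with $A, B \in \mathfrak{S}$, put $K = \ker\varphi$, and let $\nu : A \to A/K$ be the natural map, so that $A/K \cong \varphi(A) \le B$. Both $K$ (a subgroup of $A$) and $\varphi(A)$ (a subgroup of $B$) are solvable. Choose subnormal series with abelian factors $K = K_0 \trianglerighteq \cdots \trianglerighteq K_s = 1$ and $A/K = P_0 \trianglerighteq \cdots \trianglerighteq P_t = 1$. Pulling the second series back along $\nu$ and splicing in the first gives
\[
A = \nu^{-1}(P_0) \trianglerighteq \nu^{-1}(P_1) \trianglerighteq \cdots \trianglerighteq \nu^{-1}(P_t) = K = K_0 \trianglerighteq \cdots \trianglerighteq K_s = 1,
\]
a subnormal series of $A$ whose successive factors are $\nu^{-1}(P_j)/\nu^{-1}(P_{j+1}) \cong P_j/P_{j+1}$ for $0 \le j \le t-1$ together with $K_i/K_{i+1}$ for $0 \le i \le s-1$; all of these are abelian. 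Summing $r_0$ over this series and using $A/K \cong \varphi(A)$ gives $h(A) = \sum_j r_0(P_j/P_{j+1}) + \sum_i r_0(K_i/K_{i+1}) = h(\varphi(A)) + h(\ker\varphi)$, as claimed.

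For the final clause, given an extension $1 \to N \to G \to Q \to 1$ with $N, Q \in \mathfrak{S}$ and projection $\pi : G \to Q$, one applies the identity just proved to $\pi$: since $\pi(G) = Q$ and $\ker\pi = N$, it reads $h(G) = h(Q) + h(N)$, which is the additivity on extensions. I expect the only real obstacle to be the well-definedness of $h$ in the first step; once the Schreier refinement theorem and the exactness of $- \otimes_{\mathbb{Z}} \mathbb{Q}$ on abelian groups are in hand, the remaining steps are routine manipulations of subnormal series. (The same argument applies verbatim to solvable groups of infinite Hirsch length, with the usual conventions for adding elements of $\mathbb{N} \cup \{\infty\}$.)
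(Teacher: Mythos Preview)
Your argument is correct and is the standard textbook proof: well-definedness of $h$ via Schreier refinement together with additivity of torsion-free rank, followed by splicing a subnormal series of $\ker\varphi$ with the pullback of one for $A/\ker\varphi$. The paper does not give its own proof of this lemma at all; it merely cites \cite[\S 1.3]{lr}, so there is nothing to compare against beyond noting that your proof is exactly the one found in that reference.
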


We have immediately the next consequence.

\begin{cor}\label{c:1extra}$f(h(S))\leq
h(J_2(S))$ for all $S \in \mathfrak{S}$.
\end{cor}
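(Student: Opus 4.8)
The plan is to extract the inequality directly from the commutative diagram (\ref{e:graph1}) together with the additivity of the Hirsch length recorded in Lemma \ref{l:1}. The feature of (\ref{e:graph1}) that matters here is that $\kappa : S \otimes S \to [S,S]$ is an epimorphism with $\ker \kappa = J_2(S)$; equivalently, (\ref{e:graph1}) provides a short exact sequence $1 \to J_2(S) \to S \otimes S \to [S,S] \to 1$ in which the last map is $\kappa$. So the whole argument is just a matter of splitting $h(S\otimes S)$ along this sequence and controlling the quotient part $h([S,S])$ by $h(S)$.

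First I would check that Lemma \ref{l:1} applies where it is needed. By Remark \ref{r:1} we have $S \otimes S \in \mathfrak{S}$ whenever $S \in \mathfrak{S}$, and since subgroups and homomorphic images of solvable groups are solvable, the groups $[S,S]$, $S^{ab}=S/[S,S]$ and $J_2(S)\le S\otimes S$ all lie in $\mathfrak{S}$; hence Lemma \ref{l:1} is legitimate for each homomorphism below. Applying Lemma \ref{l:1} to $\kappa$ then gives
\[
h(S \otimes S) = h\big(\kappa(S \otimes S)\big) + h(\ker \kappa) = h([S,S]) + h(J_2(S)).
\]
Next I would bound $h([S,S])$ from above by $h(S)$: applying Lemma \ref{l:1} to the canonical projection $S \twoheadrightarrow S^{ab}$, whose kernel is $[S,S]$, yields $h(S) = h(S^{ab}) + h([S,S]) \ge h([S,S])$. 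Substituting into the previous display,
\[
f(h(S)) = h(S \otimes S) - h(S) = h([S,S]) - h(S) + h(J_2(S)) \le h(J_2(S)),
\]
which is the assertion.

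I do not expect a genuine obstacle here; the statement is essentially a one-line consequence of the diagram and Lemma \ref{l:1}. The only points that need care are: (i) confirming, as above, that every group to which Lemma \ref{l:1} is applied is solvable, so that its Hirsch length is defined and the additive formula is valid (and, should $h(S\otimes S)$ or $h(J_2(S))$ be infinite, the inequality is read with the usual conventions and still follows at once from the displayed equality); and (ii) using precisely the identification $\ker\kappa = J_2(S)$ coming from (\ref{e:graph1}), rather than the analogous statement for $\kappa'$ and $M(S)$. It is worth remarking that equality $f(h(S)) = h(J_2(S))$ holds exactly when $h([S,S]) = h(S)$, i.e. when $S^{ab}$ is finite; this is the case for the crystallographic groups $G_n$ and the pro-$p$-groups $K_s$ of the previous section, whose abelianizations are $C_n\times C_n$ and $C_{p^s}\times C_{p^s}$ respectively, so that there one even has $f(h(S)) = h(J_2(S))$.
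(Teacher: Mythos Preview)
Your proof is correct and follows essentially the same route as the paper: both use the short exact sequence $1\to J_2(S)\to S\otimes S\to [S,S]\to 1$ from (\ref{e:graph1}) together with Lemma~\ref{l:1} to obtain $h(S\otimes S)=h(J_2(S))+h([S,S])$, and then bound $h([S,S])\le h(S)$. The only minor difference is that the paper invokes monotonicity of $h$ on the subgroup $[S,S]\le S$, whereas you deduce the same inequality by applying Lemma~\ref{l:1} to $S\twoheadrightarrow S^{ab}$; your added remark on the equality case is consistent with (and anticipates) the later results on $G_p$ and $K_s$.
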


\begin{proof} (\ref{e:graph1}) shows that $S\otimes S\in
\mathfrak{S}$ is a central extension of $J_2(S)$ by $[S,S]$. From
Lemma \ref{l:1}, $h(S\otimes S)=h(J_2(S))+h([S,S])$. On another
hand, $[S,S]\leq S$ implies $h([S,S])\leq h(S)$ and so $h(S\otimes
S)\leq h(J_2(S))+h(S)$ from which the result follows.
\end{proof}

We recall the following information on the structure of $J_2(G)$, $\nabla(G)$ and $G\otimes G$.

\begin{prop}[See \cite{bfm}, Corollary 1.4]\label{p:bfm1}
Let $G$ be a group such that $G^{ab}$ is abelian finitely generated with no elements of square order. Then
$J_2(G)=\Gamma(G^{ab})\times M(G)$.
\end{prop}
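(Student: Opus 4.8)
The plan is to read the relevant short exact sequence off the diagram (\ref{e:graph1}) and then split it. First note three things. Since the middle column of (\ref{e:graph1}) is a central extension, $J_2(G)=\ker\kappa$ lies in the centre of $G\otimes G$ and is in particular abelian. The subgroup $\nabla(G)=\langle x\otimes x\mid x\in G\rangle$ coincides both with the image of $\psi$ and with the kernel of $\pi\colon G\otimes G\twoheadrightarrow G\wedge G$. Finally, the top row of (\ref{e:graph1}) together with the exterior--square column yields the short exact sequence of abelian groups
\[
0\longrightarrow\nabla(G)\longrightarrow J_2(G)\longrightarrow M(G)\longrightarrow 0.\qquad(\ast)
\]
Hence it suffices to prove (i) that $\psi\colon\Gamma(G^{ab})\to\nabla(G)$ is an isomorphism, and (ii) that the extension $(\ast)$ splits.

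For (i): the defining relations of $G\otimes G$ show (cf. \cite{bjr}) that $x[G,G]\mapsto x\otimes x$ is a well--defined quadratic map $G^{ab}\to\nabla(G)$ in the sense of Whitehead, with associated symmetric bilinear pairing $(x[G,G],y[G,G])\mapsto(x\otimes y)(y\otimes x)$; so by the universal property of the functor $\Gamma$ it factors through a homomorphism $\Gamma(G^{ab})\to\nabla(G)$, which is the map $\psi$ of (\ref{e:graph1}) and is surjective by construction. To obtain injectivity one produces a left inverse $\nabla(G)\to\Gamma(G^{ab})$, $x\otimes x\mapsto\gamma(x[G,G])$, by checking that it respects a presentation of $\nabla(G)$ inherited from the relations of $G\otimes G$; equivalently, by exactness of the top row of (\ref{e:graph1}), one checks that the natural map $H_3(G)\to\Gamma(G^{ab})$ vanishes. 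In either formulation the only possible obstruction lies in the torsion of $G^{ab}$ --- in relations that would force some $x\otimes x$ to be divisible by a power of $2$ not accounted for by $\Gamma(G^{ab})$ --- and the hypothesis that $G^{ab}$ has no element of square order (in effect, no element of order $2$) is exactly what removes them; the finitely generated bookkeeping is then organised by the primary decomposition of $G^{ab}$ and the standard isomorphism $\Gamma(A\oplus B)\cong\Gamma(A)\oplus\Gamma(B)\oplus(A\otimes B)$.

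For (ii): the cleanest route is to establish the stronger statement that $\pi\colon G\otimes G\twoheadrightarrow G\wedge G$ splits, say by $\sigma\colon G\wedge G\to G\otimes G$. Then $G\otimes G=\nabla(G)\times\sigma(G\wedge G)$ --- a direct product because $\nabla(G)$ is central --- and, writing $\kappa=\kappa'\pi$, one gets $J_2(G)=\ker\kappa=\nabla(G)\times\sigma(\ker\kappa')\cong\nabla(G)\times M(G)$, as desired. Constructing $\sigma$ amounts to lifting the generators of $G\wedge G$ to $G\otimes G$ compatibly with the relations, and the obstruction is once more a $2$-torsion phenomenon in $G^{ab}$ that the square--order hypothesis kills; the guiding model is $G$ abelian, where $(\ast)$ reads $0\to\Gamma(G)\to G\otimes_{\mathbb{Z}}G\to\Lambda^2 G\to 0$ and such a $\sigma$ exists because, $G$ having no $2$-torsion, the symmetric and alternating parts of $G\otimes_{\mathbb{Z}}G$ separate. (Alternatively one may split $(\ast)$ directly by lifting the inclusion $M(G)=\ker\kappa'\hookrightarrow G\wedge G$ to a homomorphism $M(G)\to G\otimes G$, which automatically takes values in $\ker\kappa=J_2(G)$.)

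The main obstacle is step (i): settling on a workable presentation of $\nabla(G)$ and verifying the left inverse of $\psi$ --- equivalently, showing $H_3(G)\to\Gamma(G^{ab})$ is zero --- since this is where all the interaction between the quadratic functor $\Gamma$, the relations defining $G\otimes G$, and the square--order hypothesis on $G^{ab}$ is concentrated. Once $\nabla(G)\cong\Gamma(G^{ab})$ is in place and one exploits that $J_2(G)$ is abelian, step (ii) is comparatively routine.
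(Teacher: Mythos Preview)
The paper does not prove this proposition; it is quoted from \cite{bfm} (Corollary~1.4 there) and used as a black box, so there is no in--paper argument to compare yours against.

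Your outline is the right shape and is essentially the strategy of \cite{bfm}: extract $0\to\nabla(G)\to J_2(G)\to M(G)\to 0$ from diagram~(\ref{e:graph1}), identify $\nabla(G)\cong\Gamma(G^{ab})$, and split the sequence by splitting the larger surjection $G\otimes G\twoheadrightarrow G\wedge G$ (the latter is exactly Proposition~\ref{p:bfm2}, also imported from \cite{bfm}). You have also correctly located the role of the hypothesis: both the injectivity of $\psi$ and the existence of the section are obstructed only by $2$--torsion in $G^{ab}$, and your parenthetical reading of ``square order'' as ``order two'' matches the paper's own restatement in Theorem~\ref{t:1}.

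Where your write--up is thin is precisely where the work sits. In \cite{bfm} the argument is carried out inside Rocco's group $\nu(G)$, where $G\otimes G$ is realised as $[G,G^{\varphi}]$ and $\nabla(G)$ is generated by the elements $[g,g^{\varphi}]$; the no--$2$--torsion hypothesis is what makes these depend only on the coset $g[G,G]$, yielding $\nabla(G)\cong\nabla(G^{ab})$, after which the finitely generated abelian case is a direct computation. Your reformulation ``the map $H_3(G)\to\Gamma(G^{ab})$ vanishes'' is indeed equivalent to injectivity of $\psi$ by exactness of the top row, but you give no mechanism for verifying it for a general $G$; likewise ``lifting generators compatibly with relations'' for the section $\sigma$ is an aspiration rather than a construction. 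As written, steps (i) and (ii) are plausible assertions with the correct hypotheses flagged, but the actual verification --- which in \cite{bfm} goes through concrete commutator identities in $\nu(G)$ --- is not supplied.
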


\begin{prop}[See \cite{bfm}, Theorem 1.3 (iii)]\label{p:bfm2}
Let $G$ be a group such that either $G^{ab}$ has no elements of square order or $G'$ has a complement in $G$.
Then $\nabla(G)\simeq \nabla(G^{ab})$ and $G\otimes G \simeq \nabla(G) \times (G \wedge G)$.
\end{prop}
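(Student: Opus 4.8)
The plan is to reduce the splitting $G\otimes G\simeq\nabla(G)\times(G\wedge G)$ to the existence of a group homomorphism $r\colon G\otimes G\to\nabla(G)$ restricting to the identity on $\nabla(G)$, to construct such an $r$ out of the abelianization map $G\twoheadrightarrow G^{ab}$, and to treat the two hypotheses by proving in each case that $\nabla(G)$ is carried isomorphically onto $\nabla(G^{ab})$. For the first reduction, recall from \cite{bjr} that $\nabla(G)$ is central in $G\otimes G$: if such an $r$ exists, then $G\otimes G=\nabla(G)\times\ker r$ with $\ker r$ mapping isomorphically onto $(G\otimes G)/\nabla(G)=G\wedge G$, which is the second assertion. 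For the bookkeeping, the middle row of \eqref{e:graph1} together with the maps $\kappa,\kappa'$ shows that $\nabla(G)$ is the image of $\psi\colon\Gamma(G^{ab})\to G\otimes G$, hence $\nabla(G)\simeq\Gamma(G^{ab})/K$, where $K$ is the image of $H_3(G)\to\Gamma(G^{ab})$; applying \eqref{e:graph1} to the abelian group $G^{ab}$ gives in the same way $\nabla(G^{ab})\simeq\Gamma(G^{ab})/K'$, with $K'$ the image of $H_3(G^{ab})\to\Gamma(G^{ab})$.

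Now I would introduce $\mu\colon G\otimes G\to G^{ab}\otimes_{\mathbb Z}G^{ab}$, the homomorphism induced by $G\twoheadrightarrow G^{ab}$ (the target is the ordinary tensor product, as $G^{ab}$ acts trivially on itself). It sends $x\otimes y\mapsto\bar x\otimes\bar y$, so it restricts to an epimorphism $\bar\mu\colon\nabla(G)\twoheadrightarrow\nabla(G^{ab})$, and $\mu\circ\psi$ is the Whitehead map $\psi_{G^{ab}}\colon\Gamma(G^{ab})\to G^{ab}\otimes_{\mathbb Z}G^{ab}$; consequently $K\subseteq K'$, the homomorphism $\bar\mu$ is the one induced by $\mathrm{id}_{\Gamma(G^{ab})}$, and $\bar\mu$ is an isomorphism precisely when $K=K'$. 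This equality is the heart of the proof. If $G^{ab}$ has no elements of square order, I would prove $K'=0$ by showing that $\psi_{G^{ab}}$ is injective, via a direct computation with Whitehead's functor: reduce $G^{ab}$ to a direct sum of cyclic groups, use that $\Gamma(\mathbb Z)\to\mathbb Z\otimes\mathbb Z$ is an isomorphism and that $\Gamma(\mathbb Z/n)\to\mathbb Z/n\otimes\mathbb Z/n$ is injective exactly when the summand meets the square-order restriction (for $n$ even its kernel is a copy of $\mathbb Z/2$), and check that the splitting $\Gamma(B\oplus C)=\Gamma(B)\oplus\Gamma(C)\oplus(B\otimes_{\mathbb Z}C)$ is compatible with $\psi_{G^{ab}}$. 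If instead $G'=[G,G]$ has a complement $H$ in $G$, then $G=H\ltimes G'$ and the projection $\rho\colon G\to G/G'=G^{ab}$ has a section, namely the isomorphism $G^{ab}\to H\hookrightarrow G$; this induces a section of $\rho_*\colon H_3(G)\to H_3(G^{ab})$, so $\rho_*$ is onto, and since by functoriality of \eqref{e:graph1} the map $H_3(G)\to\Gamma(G^{ab})$ factors through $\rho_*$, we get $K=K'$ at once. Either way $\bar\mu\colon\nabla(G)\to\nabla(G^{ab})$ is an isomorphism, which is the first assertion.

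It remains to build $r$. I would use the elementary fact that $\nabla(A)$ is a direct summand of $A\otimes_{\mathbb Z}A$ for every abelian group $A$ (check it block by block on a cyclic decomposition of $A$; under the first hypothesis it is contained in the computation above, where $\nabla(A)\simeq\Gamma(A)$). Applying this to $A=G^{ab}$ and choosing a retraction $q\colon G^{ab}\otimes_{\mathbb Z}G^{ab}\to\nabla(G^{ab})$, set $r:=\bar\mu^{-1}\circ q\circ\mu\colon G\otimes G\to\nabla(G)$. On $x\otimes x\in\nabla(G)$ one has $\mu(x\otimes x)=\bar x\otimes\bar x\in\nabla(G^{ab})$, which $q$ fixes and $\bar\mu^{-1}$ returns to $x\otimes x$; so $r$ restricts to the identity on $\nabla(G)$, and the first reduction yields $G\otimes G\simeq\nabla(G)\times(G\wedge G)$.

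I expect the main obstacle to be the equality $K=K'$, equivalently the injectivity of the canonical epimorphism $\nabla(G)\to\nabla(G^{ab})$: for an arbitrary $G$ this map can have a nontrivial kernel, built from the image of $H_3(G)$ in $\Gamma(G^{ab})$, and then no retraction of $G\otimes G$ onto $\nabla(G)$ exists, so one of the two hypotheses is genuinely needed. The remaining ingredients — the behaviour of Whitehead's functor on square-order-free abelian groups and the direct-summand property of $\nabla(A)$ inside $A\otimes_{\mathbb Z}A$ — are routine, but the prime $2$ has to be watched at every step.
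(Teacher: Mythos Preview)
The paper does not prove Proposition~\ref{p:bfm2}; it is quoted verbatim from \cite{bfm} (Theorem~1.3~(iii)) and used as a black box in the proof of Proposition~\ref{p:2}. There is therefore no ``paper's own proof'' to compare your attempt against.

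That said, your outline is a coherent reconstruction of the kind of argument one expects in \cite{bfm}: reduce the splitting to a retraction onto $\nabla(G)$, manufacture that retraction by pushing through the abelianization, and isolate the obstruction as the possible gap between the images $K\subseteq K'$ of $H_3(G)$ and $H_3(G^{ab})$ in $\Gamma(G^{ab})$. Your treatment of the two hypotheses is correct in spirit: under the first you kill $K'$ (hence $K$) by the injectivity of $\psi_{G^{ab}}$ on square-order-free abelian groups; under the second the section of $G\to G^{ab}$ forces $\rho_*$ on $H_3$ to be onto, whence $K=K'$ via naturality of \eqref{e:graph1}. One point to tighten: your claim that $\nabla(A)$ is a direct summand of $A\otimes_{\mathbb Z}A$ for \emph{every} abelian $A$ is stronger than you need and is exactly where the prime $2$ bites; under the first hypothesis it follows from your injectivity computation for $\psi_{G^{ab}}$, while under the second you can bypass it by pulling the retraction back along the section $G^{ab}\hookrightarrow G$ directly rather than relying on a splitting inside $G^{ab}\otimes_{\mathbb Z}G^{ab}$.
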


The linear growth of (\ref{e:2}) is described by the next result.

\begin{prop}\label{p:2} In Lemma \ref{l:eick} let  $s=1$, $p\not=2$ and $K_p=C_p\ltimes \mathbb{Z}^{p-1}_p$ be the corresponding pro--$p$--group.
Then $f(h(K_p))=\frac{1}{2}(p-1)$. In particular,
$f(h(K_p))=h(J_2(K_p))=h(M(K_p))$ has a linear growth.
\end{prop}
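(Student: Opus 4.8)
The plan is to compute each of the three quantities $f(h(K_p))$, $h(J_2(K_p))$ and $h(M(K_p))$ separately and show they coincide with $\frac{1}{2}(p-1)$. The easiest is $h(M(K_p))$: by Lemma~\ref{l:eick} with $s=1$ we have $M(K_p)\simeq \mathbb{Z}_p^{d_1/2}$ where $d_1 = p^{0}(p-1) = p-1$ (and since $p\neq 2$ we are not in the exceptional case), so $M(K_p)\simeq \mathbb{Z}_p^{(p-1)/2}$ and hence $h(M(K_p)) = \frac{1}{2}(p-1)$ directly from the definition of Hirsch length applied to a free $\mathbb{Z}_p$-module of rank $(p-1)/2$.

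Next I would compute $h(J_2(K_p))$ using Proposition~\ref{p:bfm1}. The hypothesis there requires $K_p^{ab}$ to be abelian finitely generated with no elements of square order; from the discussion after Lemma~\ref{l:eick}, $K_p^{ab} = C_{p^s}\times C_{p^s} = C_p\times C_p$ for $s=1$, which is finitely generated with no elements of square order precisely because $p$ is odd. Thus $J_2(K_p) = \Gamma(K_p^{ab})\times M(K_p)$. Now $\Gamma$ of a finite abelian group is finite (Whitehead's quadratic functor sends finite groups to finite groups — this is standard and used implicitly throughout the literature cited), so $h(\Gamma(K_p^{ab})) = 0$, and therefore by additivity of Hirsch length (Lemma~\ref{l:1}) $h(J_2(K_p)) = h(\Gamma(K_p^{ab})) + h(M(K_p)) = 0 + \frac{1}{2}(p-1) = \frac{1}{2}(p-1)$.

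Finally I would compute $f(h(K_p)) = h(K_p\otimes K_p) - h(K_p)$. By diagram~(\ref{e:graph1}), $K_p\otimes K_p$ is a central extension of $J_2(K_p)$ by $[K_p,K_p]$, so by Lemma~\ref{l:1}, $h(K_p\otimes K_p) = h(J_2(K_p)) + h([K_p,K_p])$. From the remarks after Lemma~\ref{l:eick}, $[K_p,K_p]\simeq \mathbb{Z}_p^{d_s}$ with $d_1 = p-1$, so $h([K_p,K_p]) = p-1 = h(K_p)$ (since $h(K_p) = d_s = p-1$ was also noted there). Hence $f(h(K_p)) = h(J_2(K_p)) + h([K_p,K_p]) - h(K_p) = h(J_2(K_p)) = \frac{1}{2}(p-1)$. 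Linearity in $p$ — hence the claimed ``linear growth'' — is then immediate since $h(K_p) = p-1$ and $f(h(K_p)) = \frac{1}{2}(p-1) = \frac{1}{2}h(K_p)$.

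The only genuine obstacle is justifying $h(\Gamma(K_p^{ab})) = 0$, i.e.\ that Whitehead's quadratic functor applied to the finite group $C_p\times C_p$ yields a finite group. This is a known fact about $\Gamma$ (it preserves finiteness, indeed $\Gamma$ of a finitely generated abelian group is finitely generated, and $\Gamma$ of a finite group is finite), and it can be invoked from the properties of $\Gamma$ recorded in \cite[Section~2]{bjr}; alternatively one can bypass it entirely by observing that $K_p^{ab}$ is finite, hence $J_2(K_p)$ sits in the sequence $\Gamma(K_p^{ab})\to J_2(K_p)\to M(K_p)\to 1$ with finite kernel-side term, giving $h(J_2(K_p)) = h(M(K_p))$ by Lemma~\ref{l:1} regardless. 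Everything else is bookkeeping with the explicit presentations and the additivity of the Hirsch length.
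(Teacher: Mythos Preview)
Your proof is correct and in fact more economical than the paper's. You compute only Hirsch lengths: you read off $h(M(K_p))=\tfrac{1}{2}(p-1)$ from Lemma~\ref{l:eick}, invoke Proposition~\ref{p:bfm1} plus finiteness of $\Gamma(C_p\times C_p)$ to get $h(J_2(K_p))=h(M(K_p))$, and then use the central extension $1\to J_2(K_p)\to K_p\otimes K_p\to [K_p,K_p]\to 1$ together with $h([K_p,K_p])=h(K_p)=p-1$ and Lemma~\ref{l:1} to conclude. This is entirely sound.

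The paper's proof follows the same skeleton but does strictly more: it explicitly computes $\Gamma(C_p\times C_p)\simeq C_p^2\times C_{p^2}$ and $\nabla(K_p)$, then argues via a commutator calculation that $K_p\wedge K_p$ is abelian and hence splits as $\mathbb{Z}_p^{p-1}\times \mathbb{Z}_p^{(p-1)/2}$, and finally applies Proposition~\ref{p:bfm2} to obtain the full isomorphism type $K_p\otimes K_p\simeq C_p^2\times C_{p^2}\times \mathbb{Z}_p^{3(p-1)/2}$. Only then are Hirsch lengths extracted. Your shortcut bypasses all of this structural information, which suffices for the proposition as stated; the paper's extra work is not wasted, however, since the explicit description of $K_p\otimes K_p$ is of independent interest and is what is generalised in Theorem~\ref{t:propgroups}.
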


\begin{proof} We claim that (\ref{e:graph1}) is equivalent to the
following diagram
\begin{equation}\label{e:graph2}
\begin{CD}
@. @.  1 @. 1\\
@. @. @VVV   @VVV\\
H_3(K_p)@>>>C^2_p \times C_{p^2}@>\psi>>C^2_p \times C_{p^2} \times \mathbb{Z}_p^{\f{p-1}{2}}@>>>\mathbb{Z}_p^{\f{p-1}{2}}@>>>1\\
@| @| @VVV @VVV\\
H_3(K_p)@>>>C^2_p \times C_{p^2} @>\psi>>C^2_p \times C_{p^2} \times
\mathbb{Z}^{p-1}_p \times
\mathbb{Z}^{\f{p-1}{2}}_p@>>>\mathbb{Z}^{p-1}_p  \times \mathbb{Z}^{\f{p-1}{2}}_p @>>>1.\\
@. @. @V\kappa VV   @V\kappa'VV\\
@. @. \mathbb{Z}_p^{p-1} @= \mathbb{Z}_p^{p-1}\\
@. @. @VVV   @VVV\\
@. @. 1 @. 1\\
\end{CD}
\end{equation}
From \cite[\S2, (13), p.181]{bjr}, \begin{equation}\Gamma(K_p^{ab})=\Gamma(C_p \times C_p)=C_p \times C_p \times
(C_p \otimes_\mathbb{Z} C_p)=C_p \times C_p \times C_{p^2}.\end{equation} Note that $C_p \otimes_\mathbb{Z} C_p
=C_{p^2}$ is an elementary fact on the usual abelian tensor product. Still by \cite[\S2]{bjr},
\begin{equation}\psi(\Gamma(C_p \times C_p))=\nabla(K_p)=C_p \times C_p \times C_{p^2}.\end{equation} From Lemma \ref{l:eick},
$M(K_p)=\mathbb{Z}_p^{\f{p-1}{2}}$. We do not have elements of square order in $K_p^{ab}=C_p \times C_p$ and
Proposition \ref{p:bfm1} yields $J_2(K_p)\simeq \Gamma(K_p^{ab})\times M(K_p) \simeq C_p \times C_p \times
C_{p^2} \times \mathbb{Z}_p^{\f{p-1}{2}}$.

The commutativity of (\ref{e:graph1}) shows that $K_p\wedge K_p$ is
a central extension of $M(K_p)=\ker \kappa'$ by $[K_p,K_p]$, which
are both normal abelian subgroups of $K_p\wedge K_p$, then
$K_p\wedge K_p=\langle
M(K_p),[K_p,K_p]\rangle=M(K_p)[K_p,K_p]=\langle \mathbb{Z}_p^{p-1},
\mathbb{Z}_p^\frac{p-1}{2}\rangle= \mathbb{Z}_p^{p-1}
\mathbb{Z}_p^\frac{p-1}{2}.$ On another hand,
\begin{equation}[M(K_p),M(K_p)]=[[K_p,K_p],[K_p,K_p]]=1\end{equation} implies
\begin{equation}[K_p\wedge K_p,K_p\wedge
K_p]=[M(K_p)[K_p,K_p],M(K_p)[K_p,K_p]]=[M(K_p),M(K_p)][M(K_p),[K_p,K_p]]\end{equation}\begin{equation}[[K_p,K_p],[K_p,K_p]]
[M(K_p),[K_p,K_p]]= [M(K_p),[K_p,K_p]].\end{equation} Since $M(K_p)=\mathbb{Z}_p^\frac{p-1}{2}\leq
\mathbb{Z}_p^{p-1}=[K_p,K_p]$,  we deduce $C_{K_p\wedge K_p}([K_p,K_p])\leq C_{K_p\wedge K_p}(M(K_p))$ and then
\begin{equation}[K_p,K_p]\leq C_{K_p\wedge K_p}([K_p,K_p])\leq C_{K_p\wedge K_p}(M(K_p)),\end{equation} which
implies $[M(K_p),[K_p,K_p]]=1$. We conclude that $K_p \wedge K_p$ is abelian and then the central extension is
actually a direct product of the form $K_p \wedge K_p = \mathbb{Z}^{p-1}_p  \times \mathbb{Z}^{\f{p-1}{2}}_p$.
From Proposition \ref{p:bfm2},
\begin{equation}\label{e:crystallographic2}K_p\otimes K_p=C_p \times C_p \times C_{p^2} \times \mathbb{Z}^{p-1}_p  \times \mathbb{Z}^{\f{p-1}{2}}_p.\end{equation}
Then
\begin{equation}h(M(K_p))=h(J_2(K_p)/\nabla(K_p))=h(J_2(K_p))=\frac{p-1}{2}.\end{equation}
We conclude from (\ref{e:graph2}) and Lemma \ref{l:1} that
\begin{equation}\label{e:extra}h(K_p\otimes K_p)=h(\kappa(K_p\otimes
 K_p))+h(J_2(K_p))=(p-1)+h(M(K_p))=\frac{3}{2}(p-1).\end{equation} Therefore
 $f(h(K_p))=h(J_2(K_p))=\frac{1}{2}(p-1)$.
\end{proof}

The methods in the above proof continue to be valid when $s>1$. Therefore we draw the following result, which
has independent interest and, in view of \cite[Theorem 7.4.12, Corollary 7.4.13]{charles}, describes the
nonabelian tensor square of all pro--$p$--groups of finite coclass with trivial center.

\begin{thm}\label{t:propgroups}
If $s>1$ and $p$ is an odd prime, then $K_s \otimes K_s=C^2_{p^s}
\times C_{p^{2s}}\times \mathbb{Z}^{\frac{3}{2}d_s}_p$.In
particular, $f(h(K_s))=h(J_2(K_s))=h(M(K_s))$ has a linear growth.
\end{thm}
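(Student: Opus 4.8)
The strategy is to repeat the argument of Proposition \ref{p:2} verbatim, keeping track of the parameter $d_s=p^{s-1}(p-1)$ in place of $p-1$ and of $C_{p^s}$ in place of $C_p$. First I would record the abelian invariants that feed the diagram (\ref{e:graph1}): from the remarks following Lemma \ref{l:eick} we have $K_s^{ab}=C_{p^s}\times C_{p^s}$, $[K_s,K_s]\simeq \mathbb{Z}_p^{d_s}$, $h(K_s)=d_s$, $Z(K_s)=1$, and from Lemma \ref{l:eick} itself $M(K_s)\simeq \mathbb{Z}_p^{d_s/2}$ (the exceptional case $p=2$, $s=1$ is excluded since $p$ is odd and $s>1$). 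Next, using \cite[\S2, (13)]{bjr} I would compute $\Gamma(K_s^{ab})=\Gamma(C_{p^s}\times C_{p^s})=C_{p^s}\times C_{p^s}\times (C_{p^s}\otimes_{\mathbb{Z}}C_{p^s})=C^2_{p^s}\times C_{p^{2s}}$, and identify $\psi(\Gamma(K_s^{ab}))=\nabla(K_s)\simeq\nabla(K_s^{ab})=C^2_{p^s}\times C_{p^{2s}}$, the last isomorphism being Proposition \ref{p:bfm2} (applicable because $K_s^{ab}$ has no elements of square order, $p$ being odd). Since $K_s^{ab}$ has no elements of square order, Proposition \ref{p:bfm1} gives $J_2(K_s)\simeq\Gamma(K_s^{ab})\times M(K_s)\simeq C^2_{p^s}\times C_{p^{2s}}\times\mathbb{Z}_p^{d_s/2}$.

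The second block is the computation of $K_s\wedge K_s$. As in Proposition \ref{p:2}, commutativity of (\ref{e:graph1}) exhibits $K_s\wedge K_s$ as a central extension of $M(K_s)=\ker\kappa'$ by $[K_s,K_s]$, both of which are normal abelian subgroups, so $K_s\wedge K_s=M(K_s)[K_s,K_s]$. To see this extension splits as a direct product, I would argue that $[K_s\wedge K_s,K_s\wedge K_s]=[M(K_s),[K_s,K_s]]$ (using that $M(K_s)$ and $[K_s,K_s]$ are abelian), and then that $M(K_s)\simeq\mathbb{Z}_p^{d_s/2}$ sits inside $[K_s,K_s]\simeq\mathbb{Z}_p^{d_s}$ — more precisely, that the copy of $M(K_s)$ in $K_s\wedge K_s$ is centralized by $[K_s,K_s]$ because $[K_s,K_s]$ centralizes itself and $M(K_s)$ is contained in it — forcing $[M(K_s),[K_s,K_s]]=1$. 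Hence $K_s\wedge K_s$ is abelian and, the extension being central with abelian quotient and no obstruction, $K_s\wedge K_s\simeq\mathbb{Z}_p^{d_s}\times\mathbb{Z}_p^{d_s/2}=\mathbb{Z}_p^{\frac32 d_s}$. Then Proposition \ref{p:bfm2} gives $K_s\otimes K_s\simeq\nabla(K_s)\times(K_s\wedge K_s)=C^2_{p^s}\times C_{p^{2s}}\times\mathbb{Z}_p^{\frac32 d_s}$, which is the claimed description. Finally, applying Lemma \ref{l:1} to the column $1\to J_2(K_s)\to K_s\otimes K_s\xrightarrow{\kappa}[K_s,K_s]\to 1$ of (\ref{e:graph1}) yields $h(K_s\otimes K_s)=h([K_s,K_s])+h(J_2(K_s))=d_s+\frac{d_s}{2}=\frac32 d_s$, whence $f(h(K_s))=h(K_s\otimes K_s)-h(K_s)=\frac32 d_s-d_s=\frac{d_s}{2}=h(J_2(K_s))=h(M(K_s))$, which is linear in $d_s$.

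The step I expect to be the genuine obstacle — and the one where "the methods continue to be valid when $s>1$" must be checked rather than asserted — is the claim that $[M(K_s),[K_s,K_s]]=1$ inside $K_s\wedge K_s$, i.e. that the central extension of $M(K_s)$ by $[K_s,K_s]$ actually splits as an internal direct product. In Proposition \ref{p:2} this used the concrete inclusion $M(K_p)=\mathbb{Z}_p^{(p-1)/2}\le\mathbb{Z}_p^{p-1}=[K_p,K_p]$ together with $[[K_p,K_p],[K_p,K_p]]=1$; for general $s$ one must verify that the image of $M(K_s)$ under $\psi$ lands inside (a conjugate of) $[K_s,K_s]$ in $K_s\wedge K_s$, so that $C_{K_s\wedge K_s}([K_s,K_s])\le C_{K_s\wedge K_s}(M(K_s))$ and the self-centralizing property of $[K_s,K_s]$ transfers. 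This is where one leans on the explicit presentation in Lemma \ref{l:eick} and on Lemma \ref{l:eick}'s identification $M(K_s)\simeq\mathbb{Z}_p^{d_s/2}$; once this containment is in hand, everything else is the formal diagram chase and the additivity of the Hirsch length. The remaining routine points — that $C_{p^s}\otimes_{\mathbb{Z}}C_{p^s}\simeq C_{p^{2s}}$, that the $p$-adic abelian groups involved have no $2$-torsion so Propositions \ref{p:bfm1} and \ref{p:bfm2} apply — are immediate.
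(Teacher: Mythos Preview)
Your proposal is precisely the paper's approach: the paper's own proof consists of the single sentence ``Mutatis mutandis, we may argue as in the proof of Proposition \ref{p:2},'' and what you have written is exactly that argument spelled out with $d_s$ in place of $p-1$ and $C_{p^s}$ in place of $C_p$. Your identification of the splitting step $[M(K_s),[K_s,K_s]]=1$ as the point requiring care is apt---the paper does not elaborate on it beyond the $s=1$ case---but the overall route is identical.
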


\begin{proof} Mutatis mutandis, we may argue as in the proof of Proposition \ref{p:2}.
\end{proof}

The computational data show that $M(G_p)=\mathbb{Z}^{\f{p-1}{2}}$. In alternative, an argument as in \cite[Proof
of Theorem 7]{eick} can be applied, that is, we may express the Schur's Formula for $M(G_p)$, beginning from the
presentation in Corollary \ref{c:1extra}. Equivalently, we may work via duality, since the cohomology of $G_p$
is known by \cite{agpp}. This justifies the assumption of the next result.

\begin{cor}\label{c:extra} Assume $M(G_p)=\mathbb{Z}^{\f{p-1}{2}}$ for all primes
$p\not=2$. Then $f(h(G_p))=\frac{1}{2}(p-1)$. In particular,
$f(h(G_p))=h(J_2(G_p))=h(M(G_p))$  has a linear growth.
\end{cor}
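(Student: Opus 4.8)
The plan is to mirror the proof of Proposition \ref{p:2} almost verbatim, substituting the crystallographic group $G_p$ for the pro--$p$--group $K_p$ and using the stated hypothesis $M(G_p)=\mathbb{Z}^{\frac{p-1}{2}}$ in place of the conclusion of Lemma \ref{l:eick}. First I would record the structural data already established in the excerpt: $G_p\in\mathfrak{P}$ with $h(G_p)=p-1$, $Z(G_p)=\{1\}$, $G_p$ is metabelian with $[G_p,G_p]$ free abelian of rank $p-1$, and $G_p^{ab}=C_p\times C_p$. Since $C_p\times C_p$ has no elements of square order (as $p$ is odd), Proposition \ref{p:bfm2} applies and gives $\nabla(G_p)\simeq\nabla(G_p^{ab})$ together with the splitting $G_p\otimes G_p\simeq\nabla(G_p)\times(G_p\wedge G_p)$; likewise Proposition \ref{p:bfm1} gives $J_2(G_p)\simeq\Gamma(G_p^{ab})\times M(G_p)$.

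Next I would compute the pieces. Using \cite[\S2, (13), p.181]{bjr} one has $\Gamma(G_p^{ab})=\Gamma(C_p\times C_p)=C_p\times C_p\times(C_p\otimes_{\mathbb{Z}}C_p)=C_p\times C_p\times C_{p^2}$, and $\psi(\Gamma(C_p\times C_p))=\nabla(G_p)=C_p\times C_p\times C_{p^2}$; combined with the hypothesis this yields $J_2(G_p)\simeq C_p\times C_p\times C_{p^2}\times\mathbb{Z}^{\frac{p-1}{2}}$. Then I would show $G_p\wedge G_p$ is abelian by the same commutator argument used for $K_p$: diagram (\ref{e:graph1}) exhibits $G_p\wedge G_p$ as a central extension of $M(G_p)$ by $[G_p,G_p]$ with both subgroups abelian, so $G_p\wedge G_p=M(G_p)[G_p,G_p]$; since $M(G_p)=\mathbb{Z}^{\frac{p-1}{2}}$ embeds in $[G_p,G_p]=\mathbb{Z}^{p-1}$ we get $C_{G_p\wedge G_p}([G_p,G_p])\le C_{G_p\wedge G_p}(M(G_p))$, forcing $[M(G_p),[G_p,G_p]]=1$ and hence $[G_p\wedge G_p,G_p\wedge G_p]=1$. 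So $G_p\wedge G_p=\mathbb{Z}^{p-1}\times\mathbb{Z}^{\frac{p-1}{2}}$, and Proposition \ref{p:bfm2} then gives $G_p\otimes G_p=C_p\times C_p\times C_{p^2}\times\mathbb{Z}^{p-1}\times\mathbb{Z}^{\frac{p-1}{2}}$.

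Finally I would extract the Hirsch lengths. Since $\nabla(G_p)$ is finite, $h(J_2(G_p))=h(M(G_p))=\frac{p-1}{2}$; and by Lemma \ref{l:1} applied to $\kappa$, $h(G_p\otimes G_p)=h(\kappa(G_p\otimes G_p))+h(\ker\kappa)=h([G_p,G_p])+h(J_2(G_p))=(p-1)+\frac{p-1}{2}=\frac{3}{2}(p-1)$. Therefore $f(h(G_p))=h(G_p\otimes G_p)-h(G_p)=\frac{3}{2}(p-1)-(p-1)=\frac{1}{2}(p-1)=h(J_2(G_p))=h(M(G_p))$, which is linear in $p$. The only genuine issue is that $M(G_p)=\mathbb{Z}^{\frac{p-1}{2}}$ is an \emph{assumption} here rather than a proved fact — the excerpt flags three possible justifications (GAP computation, the Hopf/Schur formula from the presentation in Corollary \ref{c:1extra}, or duality via the known cohomology of $G_p$ from \cite{agpp}) — so apart from invoking that hypothesis the argument is entirely formal and is exactly the \emph{mutatis mutandis} transcription already signposted for Theorem \ref{t:propgroups}.
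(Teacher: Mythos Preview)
Your proposal is correct and takes exactly the same approach as the paper: the paper's proof is the single sentence ``We may argue as in the proof of Proposition~\ref{p:2}, replacing $K_p$ with $G_p$,'' and you have written out precisely that transcription in full detail. Your closing remark that this is the \emph{mutatis mutandis} argument already signposted is spot on.
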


\begin{proof}
We may argue as in the proof of Proposition \ref{p:2}, replacing $K_p$ with $G_p$.
\end{proof}

The above results prove that there are crystallographic groups of
holonomy $p\not=2$ which achieve the bound in Corollary
\ref{c:1extra}. The same is true for the pro--$p$--group $K_p$ with
$p\not=2$. Note that Proposition \ref{p:2} describes rigorously the
structure of $K_p\otimes K_p$ with respect to that of $K_p$ in terms
of their torsion--free factors. The same is true for $G_p$ by
Corollary \ref{c:extra}. The fact that (\ref{e:2}) has a linear
growth can be translated in terms of restrictions on the Schur
multiplier as follows.

\begin{cor}\label{c:chissa1}If $f(h(S))=c \ h(S)\ $ for some integer $c\geq 0$ and $S \in
\mathfrak{S}$, then $h(M(S))\leq h(S)^2+(c+1)h(S)$. The equality
holds, whenever $S \in \mathfrak{F}$.
\end{cor}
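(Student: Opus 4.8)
The plan is to start from the relation $f(h(S)) = h(S \otimes S) - h(S)$, which by definition and by Lemma~\ref{l:1} applied to the epimorphism $\kappa : S \otimes S \to [S,S]$ reads $h(S\otimes S) = h(J_2(S)) + h([S,S])$. Under the hypothesis $f(h(S)) = c\,h(S)$ we obtain $h(J_2(S)) = h(S\otimes S) - h(S) + h(J_2(S)) - h(J_2(S))$; more directly, combining $h(S\otimes S) = h(S) + c\,h(S)$ with $h(S\otimes S) = h(J_2(S)) + h([S,S])$ gives $h(J_2(S)) = (c+1)h(S) - h([S,S])$. Since $h([S,S]) \geq 0$, this yields $h(J_2(S)) \leq (c+1)h(S)$.

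Next I would bound $h(M(S))$ using the top exact row of diagram~(\ref{e:graph1}), namely $\Gamma(S^{ab}) \xrightarrow{\psi} J_2(S) \to M(S) \to 1$. By Lemma~\ref{l:1} (Hirsch length additive on extensions, and $M(S)$ is a quotient of $J_2(S)$), we get $h(M(S)) \leq h(J_2(S)) \leq (c+1)h(S)$. To reach the stated bound $h(M(S)) \leq h(S)^2 + (c+1)h(S)$ it then suffices to observe that $(c+1)h(S) \leq h(S)^2 + (c+1)h(S)$ trivially, so in fact the inequality holds with room to spare; but presumably the intended reading is that the term $h(S)^2$ accounts for the classical bound on the Schur multiplier of the abelianization, i.e. $h(M(S^{ab})) = h(\Lambda^2(S^{ab}))$, which for a torsion-free rank $h(S)$ abelian group is $\binom{h(S)}{2} \leq h(S)^2$. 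So the cleaner route is: split $h(M(S))$ via the Lyndon--Hochschild--Serre type estimate (or directly via the exact sequence in (\ref{e:graph1})) into a part coming from $\Gamma(S^{ab})$, whose Hirsch length is at most $h(S)^2$ since $\Gamma$ applied to a rank-$n$ abelian group has rank $\binom{n+1}{2} \leq n^2$, plus the contribution $h(J_2(S)) - h(\mathrm{im}\,\psi) \leq (c+1)h(S)$ bounded above.

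For the equality claim when $S \in \mathfrak{F}$, I would use Remark~\ref{r:1}: if $S$ is finite then $S \otimes S$ is finite, so $h(S\otimes S) = 0 = h(S)$, forcing $c = 0$ (any $c$ works since $h(S) = 0$) and $h(M(S)) = 0 = h(S)^2 + (c+1)h(S)$. Thus the equality $h(M(S)) = h(S)^2 + (c+1)h(S)$ holds vacuously (both sides are $0$) for every finite $S$.

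The main obstacle I anticipate is pinning down exactly how the $h(S)^2$ term enters: one must be careful that $h(J_2(S))$ is controlled not just by $h([S,S])$ via $\kappa$ but that the bound $h(\Gamma(S^{ab})) \leq h(S)^2$ is legitimate, which requires knowing $\Gamma$ of a finitely generated abelian group of torsion-free rank $r$ has torsion-free rank $\binom{r+1}{2}$ (see \cite[Section 2]{bjr}); combined with $h(S^{ab}) \leq h(S)$ this gives $h(\Gamma(S^{ab})) \leq \binom{h(S)+1}{2} \leq h(S)^2$. Then from the exact sequence $\Gamma(S^{ab}) \to J_2(S) \to M(S) \to 1$ and the sequence defining $J_2(S)$, one assembles $h(M(S)) \leq h(\Gamma(S^{ab})) + (\text{bounded remainder}) \leq h(S)^2 + (c+1)h(S)$, with the $(c+1)h(S)$ absorbing $h(J_2(S)) - h(\psi(\Gamma(S^{ab})))$ through the hypothesis $f(h(S)) = c\,h(S)$ and Corollary~\ref{c:1extra}. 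I would present the finite case first as the clean equality, then the general inequality.
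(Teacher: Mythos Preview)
Your core argument is correct and in fact sharper than what the paper does: from $h(S\otimes S)=(c+1)h(S)$ and $h(S\otimes S)=h(J_2(S))+h([S,S])$ you get $h(J_2(S))\le (c+1)h(S)$, and since $M(S)=J_2(S)/\nabla(S)$ is a quotient of $J_2(S)$ you immediately obtain $h(M(S))\le h(J_2(S))\le (c+1)h(S)$, which already implies the stated bound $h(M(S))\le h(S)^2+(c+1)h(S)$. Your treatment of the finite case (both sides vanish) is also fine and matches the intent.

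The paper takes a different route: it writes $f(h(S))$ as $h(M(S))-h(\nabla(S))+h([S,S])-h(S)$ (using $h(J_2(S))=h(\nabla(S))+h(M(S))$), solves for $h(M(S))$, and then bounds $h(\nabla(S))\le h(S)^2$ by constructing an embedding $\nabla(S)\hookrightarrow S\times S$ via $s\otimes s\mapsto (s,s)$. So where you go \emph{down} from $J_2(S)$ to its quotient $M(S)$, the paper goes \emph{up} from $M(S)$ to $J_2(S)$ by adding back $h(\nabla(S))$ and then controls that term separately. Your approach is more elementary (no embedding of $\nabla(S)$ needed) and yields the tighter inequality $h(M(S))\le (c+1)h(S)$; the paper's approach explains structurally why the $h(S)^2$ term appears in the statement, namely as the bound on $h(\nabla(S))$.

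Your second and third paragraphs, speculating about $\Gamma(S^{ab})$ and Lyndon--Hochschild--Serre, are therefore unnecessary detours: you already have the result after the first paragraph and the observation $h(M(S))\le h(J_2(S))$. Drop that speculation and simply state that the $h(S)^2$ term is redundant in your bound; the corollary follows a fortiori.
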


\begin{proof} We have
$f(h(S))=h(S\otimes
S)-h(S)=\Big(h(J_2(S))+h([S,S])\Big)-h(S)=h(M(S))-h(\nabla(S))+h([S,S])-h(S)$.
Now we may always write $s\otimes s=(s\otimes 1) (1 \otimes s) $ in
a unique way and then the map $\iota:  s\otimes s \in \nabla(S)
\mapsto \iota(s\otimes s)=\iota((s\otimes 1)(1 \otimes
s))=\iota(s\otimes 1) \iota(1 \otimes s)=(s,s) \in S \times S$  is a
monomorphism. Therefore $h(\nabla(S))\leq h(S)^2$ and so
$h(M(S))=f(h(S))+h(\nabla(S))-h([S,S])+h(S)\leq
f(h(S))+h(\nabla(S))+h(S) \leq c \ h(S)+h(S)^2+h(S)$ from which the
result follows.
\end{proof}

Unfortunately, (\ref{e:2}) has not a linear growth for all groups in
$\mathfrak{S}$ and we cannot predict its form in general. Already in
$\mathfrak{P}$ there are examples in this sense (see \cite[Figure at
p.943]{en}). However, a nice circumstance is described below.

\begin{cor}
There exists a metabelian group $G$ with trivial center for which
$f(h(G))=h(M(G))=\frac{1}{2}p^{s-1}(p - 1)$, where $s>1$ and $p$ is
an odd prime.
\end{cor}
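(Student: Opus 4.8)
The plan is to exhibit the pro--$p$--group $K_s = C_{p^s} \ltimes \mathbb{Z}_p^{d_s}$ of finite coclass with central exponent $s$ as the desired witness. By the discussion preceding Lemma \ref{l:eick}, $K_s$ is metabelian with $Z(K_s) = \{1\}$, so it already satisfies the structural hypotheses; it remains only to verify the numerical identity $f(h(K_s)) = h(M(K_s)) = \frac{1}{2} p^{s-1}(p-1)$. Since $d_s = p^{s-1}(p-1)$, this is exactly the statement $f(h(K_s)) = h(M(K_s)) = \frac{d_s}{2}$.

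First I would invoke Lemma \ref{l:eick}, which gives $M(K_s) \simeq \mathbb{Z}_p^{d_s/2}$ for $p$ odd (the excluded case $p=2$, $s=1$ is irrelevant here since $p$ is odd), hence $h(M(K_s)) = \frac{d_s}{2} = \frac{1}{2} p^{s-1}(p-1)$. Next I would apply Theorem \ref{t:propgroups}, which computes $K_s \otimes K_s = C_{p^s}^2 \times C_{p^{2s}} \times \mathbb{Z}_p^{\frac{3}{2} d_s}$ for $s > 1$ and $p$ odd, so that $h(K_s \otimes K_s) = \frac{3}{2} d_s$. Since $h(K_s) = d_s$, the definition of $f$ in (\ref{e:2}) yields $f(h(K_s)) = h(K_s \otimes K_s) - h(K_s) = \frac{3}{2} d_s - d_s = \frac{d_s}{2}$. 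Combining, $f(h(K_s)) = h(M(K_s)) = \frac{1}{2} p^{s-1}(p-1)$, which is what we want.

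There is essentially no obstacle: the corollary is a direct repackaging of Lemma \ref{l:eick} and Theorem \ref{t:propgroups}, together with the elementary facts $h(K_s) = d_s$ and $Z(K_s) = \{1\}$ recorded in the paragraph following Lemma \ref{l:eick}. The only point requiring a word of care is that Theorem \ref{t:propgroups} is stated for $s > 1$, so the corollary must (and does) restrict to $s > 1$; for $s = 1$ one would instead fall back on Proposition \ref{p:2}, but that case is outside the scope of the present statement. One could alternatively remark that $h(J_2(K_s)) = h(M(K_s))$ since $h(\nabla(K_s)) = 0$ (the group $\nabla(K_s) \simeq \nabla(K_s^{ab})$ is finite, being a quotient of $K_s^{ab} = C_{p^s} \times C_{p^s}$), which re-derives the same number via Corollary \ref{c:1extra}; this gives a self-contained consistency check but is not logically needed.
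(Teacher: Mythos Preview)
Your proof is correct and follows essentially the same approach as the paper: both take $G=K_s$, invoke Lemma \ref{l:eick} for $h(M(K_s))=\frac{d_s}{2}$, and invoke Theorem \ref{t:propgroups} for $h(K_s\otimes K_s)=\frac{3}{2}d_s$, then subtract $h(K_s)=d_s$. Your write-up is in fact more thorough than the paper's, since you explicitly verify the structural hypotheses (metabelian, trivial center) and flag the $s>1$ restriction.
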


\begin{proof}
  Consider $G=K_s$. By Lemma \ref{l:eick}, $h(M(K_s)) = \frac{1}{2}p^{s-1}(p - 1)$. From Theorem \ref{t:propgroups},
$f(h(K_s))=h(K_s\otimes K_s)-h(K_s)=\big(p^{s-1}(p - 1)+
\frac{1}{2}p^{s-1}(p - 1)\big)-p^{s-1}(p - 1)=\frac{1}{2}p^{s-1}(p -
1)=h(M(K_s)).$
\end{proof}

We end the section with an explicit description for (\ref{e:2}),
modifying a classic argument of N. Rocco, which can be found in
\cite[Theorem 1]{bfm} (see also \cite[Observation]{bfm}).

\begin{thm}\label{t:1}Let $G$ be a group in $\mathfrak{P}$ such that
$G^{ab}=\ds\prod_{i=1}^nC_{p^{e_i}}$, for integers $1\leq e_i\leq
e_j$ such that $1\leq i <j \leq n$, $p$ odd prime and
$d=\sum_{i=1}^n(n-i)e_i$.
\begin{itemize}\item[(a)]If $G$ is finite, then $|G\otimes
G|=p^d|G||M(G)|$.
\item[(b)]If $G$ is infinite, then $f(h(G))=h(M(G))$.\end{itemize}\end{thm}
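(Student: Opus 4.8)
\textbf{Proof proposal for Theorem \ref{t:1}.}

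The plan is to imitate N. Rocco's counting/structure argument as packaged in \cite[Theorem 1]{bfm}, but to keep track of Hirsch lengths rather than orders in the infinite case. The hypothesis $p$ odd guarantees that $G^{ab}=\prod_{i=1}^n C_{p^{e_i}}$ has no elements of square order, so both Proposition \ref{p:bfm1} and Proposition \ref{p:bfm2} apply to $G$: we get $J_2(G)=\Gamma(G^{ab})\times M(G)$ and $G\otimes G\simeq \nabla(G)\times(G\wedge G)$ with $\nabla(G)\simeq\nabla(G^{ab})$. The first thing I would do is pin down $\Gamma(G^{ab})$ and $\nabla(G^{ab})$ explicitly from \cite[\S2, (13)]{bjr}: for an abelian group $A=\prod_{i=1}^n C_{p^{e_i}}$ one has $\Gamma(A)\cong\big(\prod_{i=1}^n C_{p^{e_i}}\big)\times\prod_{i<j}\big(C_{p^{e_i}}\otimes_{\mathbb Z}C_{p^{e_j}}\big)$, and $C_{p^{e_i}}\otimes_{\mathbb Z}C_{p^{e_j}}\cong C_{p^{\min(e_i,e_j)}}=C_{p^{e_i}}$ since $e_i\le e_j$; so $|\Gamma(G^{ab})|=p^{\sum_i e_i}\cdot p^{\sum_{i<j}e_i}=p^{\sum_i e_i + d}$ with $d=\sum_{i=1}^n(n-i)e_i$, and in all cases $\psi(\Gamma(G^{ab}))=\nabla(G)$ is a finite $p$-group.

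For part (a), with $G$ finite, all groups in diagram (\ref{e:graph1}) are finite, so I would simply multiply orders along the columns: from $G\otimes G\simeq\nabla(G)\times(G\wedge G)$ and the exact column $1\to M(G)\to G\wedge G\to[G,G]\to1$ we get $|G\otimes G|=|\nabla(G)|\,|M(G)|\,|[G,G]|$. Here $|\nabla(G)|=|\nabla(G^{ab})|$; since $\nabla$ of an abelian group $A$ sits in $A\otimes_{\mathbb Z}A\twoheadrightarrow\nabla(A)$ and the standard computation gives $|\nabla(A)|=|A|\cdot\prod_{i<j}|C_{p^{e_i}}\otimes C_{p^{e_j}}|=|A|\,p^{d}=|G^{ab}|\,p^d$, we obtain $|G\otimes G|=p^d\,|G^{ab}|\,|[G,G]|\,|M(G)|=p^d\,|G|\,|M(G)|$, which is (a). (If one prefers, (a) is literally \cite[Theorem 1]{bfm}; I would cite it and only sketch the above as justification.)

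For part (b), $G$ is infinite polycyclic, so $h$ is defined and additive on extensions by Lemma \ref{l:1}. The key observation is that $G^{ab}$ is finite (it is a finite $p$-group), hence $\Gamma(G^{ab})$, $\nabla(G)=\nabla(G^{ab})$ and the image $\psi(\Gamma(G^{ab}))$ all have Hirsch length $0$; consequently $h(J_2(G))=h(\Gamma(G^{ab})\times M(G))=h(M(G))$. Now applying Lemma \ref{l:1} to the central extension $1\to J_2(G)\to G\otimes G\to[G,G]\to1$ from the middle column of (\ref{e:graph1}) gives $h(G\otimes G)=h(J_2(G))+h([G,G])=h(M(G))+h([G,G])$. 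Finally, since $G^{ab}$ is finite we have $h([G,G])=h(G)$ (the quotient $G/[G,G]$ contributes nothing to the Hirsch length, again by Lemma \ref{l:1}). Therefore $f(h(G))=h(G\otimes G)-h(G)=h(M(G))+h([G,G])-h(G)=h(M(G))$, which is (b).

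The step most likely to need care is the bookkeeping for $\Gamma$ and $\nabla$ of a finite abelian $p$-group — getting the exponent $d=\sum_{i=1}^n(n-i)e_i$ right and confirming $C_{p^{e_i}}\otimes_{\mathbb Z}C_{p^{e_j}}\cong C_{p^{e_i}}$ when $e_i\le e_j$ — but this is exactly the computation already carried out in \cite{bjr} and reused in Proposition \ref{p:2}, so no genuine obstacle arises; the rest is routine additivity of the Hirsch length via Lemma \ref{l:1}.
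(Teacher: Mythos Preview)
Your proposal is correct and follows essentially the same route as the paper: you invoke Propositions \ref{p:bfm1} and \ref{p:bfm2} (via the no-square-order hypothesis), compute $\Gamma(G^{ab})$ and $\nabla(G)$ explicitly to extract the factor $p^d$ in part (a), and for part (b) use the additivity of $h$ along the central extension $1\to J_2(G)\to G\otimes G\to [G,G]\to 1$ together with the finiteness of $G^{ab}$ (hence of $\Gamma(G^{ab})$) to reduce $f(h(G))$ to $h(M(G))$. The only cosmetic difference is that the paper packages the order count in (a) as $|G\otimes G|=\frac{|\Gamma(G^{ab})|}{|G^{ab}|}\,|G|\,|M(G)|$ rather than $|\nabla(G)|\,|[G,G]|\,|M(G)|$, but these are the same identity.
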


\begin{proof}
(a). Assume $G$ is finite. Since $G^{ab}$ is finitely generated and
has no elements of order two, all the hypotheses of \cite[Theorem
1]{bfm} are satisfied and so $G\otimes G\simeq \nabla(G) \times
G\wedge G$. From this and (\ref{e:graph1}) we deduce
\begin{equation}\label{e:5}|G\otimes
G|=\f{|\nabla(G)|}{|G^{ab}|}|G||M(G)|=\f{|\Gamma(G^{ab})|}{|G^{ab}|}|G||M(G)|=|\prod_{i=1}^n(C_{p^{e_i}})^{n-i}||G||M(G)|=p^d|G||M(G)|\end{equation}
where $d=\sum_{i=1}^n(n-i)e_i.$

(b). Assume $G$ is infinite. From Proposition \ref{p:bfm1} and Lemma \ref{l:1} we conclude that
$h(J_2(G))=h(\Gamma(G^{ab}))+ h(M(G))=h(M(G))$, where the last equality is due to the fact that $\Gamma(G^{ab})$
is periodic. Proceeding as in (\ref{e:extra}),
\begin{equation}h(G\otimes G)=h(\kappa(G\otimes G))+
h([G,G])=h(J_2(G))+ h([G,G])=h(M(G))+h([G,G]).\end{equation}
Subtracting $h(G)$, we find \begin{equation} f(h(G))=h(G\otimes
G)-h(G)=h(M(G))-(h(G)-h([G,G]))=h(M(G))-h(G^{ab})=h(M(G)),
\end{equation} since $G^{ab}$ is periodic.
\end{proof}

\begin{rem} \label{r:3}
It is not used the hypothesis $G\in \mathfrak{P}$ in Theorem
\ref{t:1} (a) and so this part of the result is true for an
arbitrary finite group.
\end{rem}



\section{Some evidences}
The present section is devoted to  evaluate (\ref{e:2}) for other classes of groups for which it is known their
nonabelian tensor product. A $Bieberbach$ $group$ $B$ is an extension of a free abelian group $L$ (called
$lattice$ $group$) of finite rank by a group $P$ (called $holonomy$ $group$). Following the notation of Lemma
\ref{l:ramras}, we are fixing $A=L$, $B=\Gamma$ and $Q=P$, imposing a precise choice for these groups. The
$dimension$ of $B$ is the rank of $L$. It is easy to see that $G_p$, studied in the previous section, is of this
form, once $L=\mathbb{Z}^{n-1}$ and $P=C_n$. It is known that
\begin{equation}\label{e:bieberbach2} B_1(2)=\langle a, x, y \ | \ a^2=y, axa^{-1}=x^{-1}, [a,y]=[x,y]=1\rangle \end{equation}
is a $Bieberbach$ $group$ $of$ $dimension$ $2$ $with$ $point$
$group$ $C_2$ and that the groups
\begin{equation}\label{e:bieberbach3} B_1(n) = B_1(2) \times \mathbb{Z}^{n-2} \,\,\,\,
\textrm{for}\,\,\, n\geq 2\end{equation} are $Bieberbach$ $groups$
$of$ $dimension$ $n$ $with$ $point$ $group$ $C_2$. More details can
be found in \cite{rohaidah}. The next two results check (\ref{e:2})
on $B_1(2)$ and $B_1(n)$.

\begin{cor}
 In $B_1(2)$ we have that $f$ is constant to 0.
\end{cor}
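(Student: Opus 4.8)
The plan is to compute $f(h(B_1(2))) = h(B_1(2)\otimes B_1(2)) - h(B_1(2))$ and show it equals $0$, i.e.\ that $h(B_1(2)\otimes B_1(2)) = h(B_1(2)) = 2$. The approach mirrors the one used for $K_p$ in Proposition~\ref{p:2}, but now the relevant homology groups will turn out to be periodic, which forces $f$ to vanish. First I would record the structural data of $G = B_1(2)$: from the presentation (\ref{e:bieberbach2}) one sees that $G = C_2 \ltimes \mathbb{Z}^2$ is a polycyclic (indeed metabelian) group with $h(G)=2$, that $[G,G]$ is infinite cyclic (generated by $x^{-2}$, since $[a,x] = x^{-2}$), and that $G^{ab} = C_2 \times C_2 \times \mathbb{Z}$ — one $\mathbb{Z}$ from $x$ collapses against its conjugate $x^{-1}$, leaving the torsion-free part generated by $y$ together with the two $C_2$'s. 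In particular $G^{ab}$ has no elements of square order, so Propositions~\ref{p:bfm1} and~\ref{p:bfm2} both apply.

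Next I would invoke Proposition~\ref{p:bfm1} to get $J_2(G) = \Gamma(G^{ab}) \times M(G)$ and Lemma~\ref{l:1} to split the Hirsch length: $h(J_2(G)) = h(\Gamma(G^{ab})) + h(M(G))$. Then, exactly as in the proof of Theorem~\ref{t:1}(b), the epimorphism $\kappa$ and Lemma~\ref{l:1} give
\begin{equation}
h(G\otimes G) = h(\kappa(G\otimes G)) + h(J_2(G)) = h([G,G]) + h(\Gamma(G^{ab})) + h(M(G)).
\end{equation}
Since $h([G,G]) = 1$, subtracting $h(G) = 2$ yields $f(h(G)) = h(\Gamma(G^{ab})) + h(M(G)) + h([G,G]) - h(G) = h(\Gamma(G^{ab})) + h(M(G)) - 1$. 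It therefore suffices to show $h(\Gamma(G^{ab})) = 0$ and $h(M(G)) = 1$; equivalently, $h(M(G)) - h(\Gamma(G^{ab})) = 1$. The Whitehead functor $\Gamma$ applied to a finitely generated abelian group is computable by the formulas in \cite[\S2]{bjr}; for $G^{ab} = C_2 \times C_2 \times \mathbb{Z}$ the result is a finite group (each $\Gamma(C_2)$ and each mixed tensor factor $C_2 \otimes_{\mathbb{Z}} C_2$, $C_2 \otimes_{\mathbb{Z}} \mathbb{Z}$ is $2$-torsion, and $\Gamma(\mathbb{Z}) = \mathbb{Z}$ contributes — wait, this needs care), so I would instead argue that the torsion-free rank of $\Gamma(G^{ab})$ equals the torsion-free rank coming from the $\mathbb{Z}$-summand of $G^{ab}$, namely $1$, and that this $\mathbb{Z}$ is precisely the image $\psi(\Gamma(G^{ab})) \cap$ (torsion-free part) absorbed into $\nabla(G)$. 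The cleanest route is: compute $M(B_1(2))$ directly (it is known to be trivial for this Bieberbach group — see \cite{rohaidah}), so $h(M(G)) = 0$, and then $h(J_2(G)) = h(\Gamma(G^{ab}))$, giving $f(h(G)) = h(\Gamma(G^{ab})) + 1 - 2 = h(\Gamma(G^{ab})) - 1$, and one checks $h(\Gamma(G^{ab})) = 1$ from the $\mathbb{Z}$-part.

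The main obstacle is pinning down the two numbers $h(M(B_1(2)))$ and $h(\Gamma(G^{ab}))$ unambiguously and making sure their contributions cancel against $h(G) - h([G,G]) = h(G^{ab}) = 1$. The conceptual content is that for this $G$ everything ``extra'' in $G \otimes G$ beyond the image of $\kappa$ is, up to finite index, already the free-abelian part $\mathbb{Z}$ of $\nabla(G) \cong \nabla(G^{ab})$, which is the same $\mathbb{Z}$ that sits inside $G^{ab}$ and hence is charged against $h(G)$; so it contributes nothing net to $f$. Once the bookkeeping is arranged — most transparently by citing the explicit value of $B_1(2) \otimes B_1(2)$ from \cite{rohaidah} and reading off its Hirsch length as $2$ — the conclusion $f(h(B_1(2))) = 2 - 2 = 0$, constant, is immediate. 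I would present the argument in the Hirsch-length form above rather than computing the full isomorphism type, since only the torsion-free rank is needed.
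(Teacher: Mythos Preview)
Your proposal eventually lands on exactly the paper's argument --- quote $B_1(2)\otimes B_1(2)\cong C_2\times C_4\times\mathbb{Z}^2$ from \cite{rohaidah}, read off Hirsch length $2$, and subtract $h(B_1(2))=2$ --- and that is all the paper does. The structural detour you build around it, however, contains several errors that would have to be fixed before that alternative route could stand on its own.

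First, $B_1(2)$ is \emph{not} $C_2\ltimes\mathbb{Z}^2$: the relation $a^2=y$ forces $a$ to have infinite order, so the extension $1\to\mathbb{Z}^2\to B_1(2)\to C_2\to 1$ is non-split (this is exactly what makes $B_1(2)$ torsion-free, i.e.\ a genuine Bieberbach group). Second, the abelianization is $G^{ab}\cong\mathbb{Z}\times C_2$, not $C_2\times C_2\times\mathbb{Z}$: once you abelianize, $y=a^2$ becomes redundant and $x^2=1$, leaving the single infinite cyclic generator $a$ and the single involution $x$. Third --- and this is the genuine obstruction --- $G^{ab}$ carries $2$-torsion, so Propositions~\ref{p:bfm1} and~\ref{p:bfm2} are not available: the Blyth--Fumagalli--Morigi splitting $J_2(G)\cong\Gamma(G^{ab})\times M(G)$ requires $G^{ab}$ to have no $2$-torsion, and here it does. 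Your own mid-proof hesitation about the $\Gamma(\mathbb{Z})=\mathbb{Z}$ contribution is a symptom of this: once $\psi$ need not be injective, the bookkeeping of torsion-free ranks via $\Gamma(G^{ab})$ no longer goes through cleanly.

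So the Hirsch-length decomposition you outline cannot be justified with the tools stated in the paper, and the direct citation of \cite{rohaidah} --- which you yourself flag as ``most transparent'' --- is both the paper's choice and, given the $2$-torsion in $G^{ab}$, the only route immediately available.
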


\begin{proof}From  \cite[Theorem 4.1]{rohaidah} we have
\begin{equation}B_1(2)\otimes
 B_1(2) = C_2 \times C_4 \times
 \mathbb{Z}^2.\end{equation} Still from \cite{rohaidah} we know that $M(B_1(2))$ is trivial.
Now $f(h(B_1(2)))=h(B_1(2)\otimes B_1(2))-h(B_1(2))=2-2=0$ and the
result follows.
\end{proof}

\begin{cor}
 In $B_1(n)$ we have that $f(h(B_1(n)))=n^2-3n+4$ for all $n>2$.
\end{cor}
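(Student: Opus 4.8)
The plan is to reduce the computation of $B_1(n)\otimes B_1(n)$ to that of $B_1(2)\otimes B_1(2)$, which is already known from the preceding corollary, together with the abelian tensor contributions coming from the free abelian direct factor $\mathbb{Z}^{n-2}$. Recall $B_1(n)=B_1(2)\times\mathbb{Z}^{n-2}$, so $h(B_1(n))=2+(n-2)=n$. The abelianization is $B_1(2)^{ab}\times\mathbb{Z}^{n-2}$; since $B_1(2)^{ab}=C_2\times\mathbb{Z}$ (from the presentation \eqref{e:bieberbach2}, $a^2=y$ central and $x$ inverted by $a$, so $a$ has infinite order modulo the commutator, $x$ maps to a $2$-torsion element), the group $B_1(n)$ has elements of square order, so Proposition~\ref{p:bfm2} is \emph{not} directly available. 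Instead I would invoke the decomposition of the nonabelian tensor square of a direct product: for groups $A,B$ one has $(A\times B)\otimes(A\times B)\cong (A\otimes A)\times(B\otimes B)\times(A^{ab}\otimes_{\mathbb{Z}}B^{ab})\times(A^{ab}\otimes_{\mathbb{Z}}B^{ab})$, the last two factors being the ``cross terms'' $A\otimes B$ and $B\otimes A$ which for the tensor square collapse to abelian tensor products of the abelianizations.

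The key steps, in order: first, set $A=B_1(2)$, $B=\mathbb{Z}^{n-2}$ and apply this direct-product formula. Second, substitute $A\otimes A=B_1(2)\otimes B_1(2)=C_2\times C_4\times\mathbb{Z}^2$ from the previous corollary. Third, compute $B\otimes B=\mathbb{Z}^{n-2}\otimes_{\mathbb{Z}}\mathbb{Z}^{n-2}=\mathbb{Z}^{(n-2)^2}$, since a free abelian group acts trivially on itself and its tensor square is the ordinary one. Fourth, compute the two cross terms $A^{ab}\otimes_{\mathbb{Z}}B^{ab}=(C_2\times\mathbb{Z})\otimes_{\mathbb{Z}}\mathbb{Z}^{n-2}=\mathbb{Z}^{n-2}\times C_2^{\,n-2}$, appearing twice. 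Fifth, since the Hirsch length is additive on direct products (Lemma~\ref{l:1}), only the torsion-free ranks matter: $h(B_1(n)\otimes B_1(n))=2+(n-2)^2+2(n-2)=n^2-2n+2$. Finally, $f(h(B_1(n)))=h(B_1(n)\otimes B_1(n))-h(B_1(n))=(n^2-2n+2)-n=n^2-3n+2$.

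Here I note a discrepancy: this bookkeeping gives $n^2-3n+2$, whereas the statement claims $n^2-3n+4$. The main obstacle will therefore be pinning down the correct torsion-free ranks of the cross terms and of $B_1(2)\otimes B_1(2)$: one must be careful that the ``cross-term'' factors in the direct-product decomposition of the tensor \emph{square} (as opposed to the general $G\otimes H$) are indeed $G^{ab}\otimes_{\mathbb{Z}}H^{ab}$ and not something larger, and one must double-check whether the correct identity is $h(B_1(2)\otimes B_1(2))=2$ or whether there is an extra free rank hidden in the interaction of the $C_2$ factor of $B_1(2)^{ab}$ with $\mathbb{Z}^{n-2}$. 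The honest plan is: (i) cite the direct-product formula for the nonabelian tensor square from \cite{visscher} or \cite{bfm}; (ii) carefully evaluate each of the (at most four) factors; (iii) add Hirsch lengths via Lemma~\ref{l:1}; (iv) subtract $h(B_1(n))=n$. The resolution of the $+2$ versus $+4$ will come down to the precise form of the cross terms, which is exactly the step I expect to require the most care — most likely there are \emph{two} cross terms each contributing $\mathbb{Z}^{n-2}$ \emph{plus} an additional free rank $n-2$ from $\nabla$-type corrections, restoring the claimed $n^2-3n+4$.
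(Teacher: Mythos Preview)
Your approach is genuinely different from the paper's. The paper does not derive the tensor square of $B_1(n)$ from first principles: it simply quotes from \cite{rohaidah} the isomorphism
\[
B_1(n)\otimes B_1(n)\;\cong\;C_2^{\,2n-3}\times C_4\times \mathbb{Z}^{(n-1)^2+1},
\]
reads off $h\bigl(B_1(n)\otimes B_1(n)\bigr)=(n-1)^2+1=n^2-2n+2$, and then subtracts. Your direct--product decomposition of $(B_1(2)\times\mathbb{Z}^{n-2})\otimes(B_1(2)\times\mathbb{Z}^{n-2})$ reproduces exactly the same Hirsch length $2+(n-2)^2+2(n-2)=n^2-2n+2$, so on the tensor side your bookkeeping is correct and there is nothing to fix in the cross terms.

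The discrepancy you flagged is real, and it lies not in your argument but in the paper's subtraction step. Since $B_1(n)=B_1(2)\times\mathbb{Z}^{n-2}$ is a Bieberbach group of dimension $n$, one has $h(B_1(n))=n$, and hence
\[
f\bigl(h(B_1(n))\bigr)=(n^2-2n+2)-n=n^2-3n+2.
\]
The paper instead writes $((n-1)^2+1)-(n-2)$, in effect using $h(B_1(n))=n-2$, and so arrives at $n^2-3n+4$. Your instinct to look for a missing ``$\nabla$--type'' free rank to recover the extra $+2$ is therefore misdirected: your computation of the torsion--free rank of the tensor square already matches the cited structure from \cite{rohaidah}, and no hidden factor is needed. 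The gap is an arithmetic slip in the paper, not in your proposal.
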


\begin{proof}From \cite[Corollary 4.1]{rohaidah} we have
\begin{equation}B_1(n)\otimes
 B_1(n) = C^{2n-3}_2 \times C_4 \times \mathbb{Z}^{(n-1)^2+1}.\end{equation}
Still from \cite{rohaidah} we know that $M(B_1(2))=n-2$ and so it is
nontrivial. Now $f(h(B_1(n)))=h(B_1(n)\otimes
B_1(n))-h(B_1(n))=((n-1)^2+1)-(n-2)=n^2-2n+2-n+2=n^2-3n+4$ and the
result follows.
\end{proof}

In a certain sense Theorem \ref{t:1} (b) forces the growth of (\ref{e:2}) to be equal to that of the Schur
multiplier, when the abelianization of the  group is the direct product of finite cyclic groups. Is this
condition really necessary? Unfortunately, the answer is positive and $B_1(n)$ for $n>2$ shows it.

\begin{cor}
For all $n>2$, $f(h(B_1(n)))$ has not a linear growth but
$h(M(B_1(n)))$ has a linear growth.
\end{cor}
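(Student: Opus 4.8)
The final statement is a trivial consequence of the two preceding corollaries, so the proof is essentially a one-liner citing them.

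The plan is to simply invoke the two immediately preceding corollaries. From the corollary computing $f(h(B_1(n))) = n^2 - 3n + 4$ for all $n > 2$, the function $f$ applied to the Hirsch length of $B_1(n)$ is a quadratic polynomial in $n$, hence not linear in $n$ (and a fortiori not linear as a function of $h(B_1(n)) = n$, since $h(B_1(n)) = n$ grows linearly in $n$ while $f(h(B_1(n)))$ grows quadratically). On the other hand, the corollary on the Schur multiplier records $h(M(B_1(n))) = n - 2$, which is linear in $n$. First I would state that $h(B_1(n)) = \dim B_1(n) = n$ (this follows from $B_1(n) = B_1(2) \times \mathbb{Z}^{n-2}$ and $h(B_1(2)) = 2$), so that both quantities are being compared against the same linear scale. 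Then the contrast between the quadratic expression $n^2 - 3n + 4$ and the linear expression $n - 2$ is immediate.

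There is no real obstacle here: the substantive work was done in establishing the formulas $B_1(n) \otimes B_1(n) = C_2^{2n-3} \times C_4 \times \mathbb{Z}^{(n-1)^2 + 1}$ and $M(B_1(n)) = \mathbb{Z}^{n-2}$, which are quoted from \cite{rohaidah} and used in the preceding corollaries. The only point worth being careful about is the phrasing of "linear growth": one should make explicit that we regard $f$ and $h(M(-))$ as functions of $n$ (equivalently of $h(B_1(n))$), and then observe that $n^2 - 3n + 4$ is not of the form $cn + d$ for constants $c, d$, whereas $n - 2$ obviously is. I would therefore write the proof as: by the previous corollary $f(h(B_1(n))) = n^2 - 3n + 4$, which is a polynomial of degree $2$ in $n = h(B_1(n))$ and hence does not grow linearly; by the preceding corollary $h(M(B_1(n))) = n - 2$, which is a polynomial of degree $1$ in $n$ and hence grows linearly; this proves the claim.

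\begin{proof}
Recall from \eqref{e:bieberbach3} that $B_1(n) = B_1(2) \times \mathbb{Z}^{n-2}$, so $h(B_1(n)) = h(B_1(2)) + (n-2) = 2 + (n-2) = n$; thus both $f(h(B_1(n)))$ and $h(M(B_1(n)))$ are naturally viewed as functions of $n = h(B_1(n))$. By the previous corollary, $f(h(B_1(n))) = n^2 - 3n + 4$ for all $n > 2$, which is a polynomial of degree $2$ in $n$ and therefore is not of the form $c\, h(B_1(n)) + c'$ for constants $c, c'$; hence $f(h(B_1(n)))$ does not have a linear growth. On the other hand, by the corollary on $M(B_1(n))$ we have $h(M(B_1(n))) = n - 2$ for all $n > 2$, which is a polynomial of degree $1$ in $n = h(B_1(n))$; hence $h(M(B_1(n)))$ has a linear growth.
\end{proof}
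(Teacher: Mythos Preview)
Your proposal is correct and follows essentially the same approach as the paper: the paper's proof simply records the two formulas $f(h(B_1(n)))=n^2-3n+4$ and $h(M(B_1(n)))=n-2$ from the preceding corollary and lets the quadratic-versus-linear contrast speak for itself. Your version is slightly more explicit (you verify $h(B_1(n))=n$ and spell out why degree~$2$ excludes linear growth), but the argument is the same.
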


\begin{proof}$f(h(B_1(n)))=n^2-3n+4$ and
$h(M(B_1(n)))=n-2$.
\end{proof}

Recent progresses in \cite{bfm,bm} show that the nonabelian tensor
product of Bieberbach groups has a similar structure with respect to
that of the free solvable groups of finite rank and free nilpotent
groups of finite rank. Therefore we have the following results.

\begin{cor}
Let $F$ be the free group of finite rank $r\geq1$ and $G=F/F^{(d)}$ be the free solvable group of derived length
$d\geq1$ and rank $r$. If $F'$ is periodic, then $f(h(G))\leq \frac{1}{2}r(r-1)$. In particular, if $h(G)=r$,
then the equality holds and $f(h(G))=\frac{1}{2}r(r-1)$.
\end{cor}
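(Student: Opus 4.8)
The plan is to redo, in this free-solvable setting, the Hirsch-length bookkeeping already used in Proposition~\ref{p:2} and in Theorem~\ref{t:1}(b); the new feature is that now $G^{ab}=F/F'\simeq\mathbb{Z}^{r}$ is \emph{free} abelian of rank $r$ rather than a finite $p$-group. First I would note that $\mathbb{Z}^{r}$ has no element of square order, so Propositions~\ref{p:bfm1} and~\ref{p:bfm2} both apply to $G$, giving $J_{2}(G)\simeq\Gamma(G^{ab})\times M(G)$, $G\otimes G\simeq\nabla(G)\times(G\wedge G)$ and $\nabla(G)\simeq\nabla(G^{ab})$. By \cite[\S2]{bjr} one has $\Gamma(\mathbb{Z}^{r})\simeq\mathbb{Z}^{r(r+1)/2}$, and an elementary computation inside $\mathbb{Z}^{r}\otimes_{\mathbb{Z}}\mathbb{Z}^{r}$ shows that $\nabla(\mathbb{Z}^{r})$ is free abelian of the same rank $r(r+1)/2$; equivalently one may quote the explicit form of the nonabelian tensor square of a free solvable group of finite rank from \cite{bfm,bm}, exactly as was done above for the Bieberbach groups $B_{1}(n)$.

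Next I would feed this into diagram~(\ref{e:graph1}): $G\otimes G$ is a central extension of $J_{2}(G)$ by $[G,G]$ and $J_{2}(G)/\nabla(G)\simeq M(G)$, so the additivity of the Hirsch length on extensions (Lemma~\ref{l:1}) gives
\[
h(G\otimes G)=h([G,G])+h(J_{2}(G))=h([G,G])+\tfrac{1}{2}r(r+1)+h(M(G)),
\]
whereas the extension $1\to[G,G]\to G\to G^{ab}\to1$ gives $h(G)=r+h([G,G])$. Subtracting,
\[
f(h(G))=h(G\otimes G)-h(G)=\tfrac{1}{2}r(r+1)-r+h(M(G))=\tfrac{1}{2}r(r-1)+h(M(G)).
\]
So the statement is reduced to showing that $h(G)$ is finite (so that $f(h(G))$ really is the integer predicted by (\ref{e:2})) and that $h(M(G))=0$.

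The finiteness is immediate: the hypothesis that $F'=[G,G]=F'/F^{(d)}$ is periodic gives $h([G,G])=0$, hence $h(G)=r$, which is precisely the ``in particular'' clause; and once $h(M(G))=0$ is known one gets $f(h(G))=\tfrac{1}{2}r(r-1)$, establishing both the inequality and the equality at one stroke. The real obstacle is the vanishing $h(M(G))=0$. By Hopf's formula $M(G)=F^{(d)}/[F,F^{(d)}]$ fits into an exact sequence $1\to M(G)\to F'/[F,F^{(d)}]\to F'/F^{(d)}\to1$, so the periodicity of $[G,G]=F'/F^{(d)}$ does not by itself make $M(G)$ periodic; one must use more of the structure of the free solvable group (for instance the $\mathbb{Z}[\mathbb{Z}^{r}]$-module structure of $M(G)$, together with the fact that the hypothesis forces this module to be torsion) or read off $h(M(G))=0$ directly from the description of $M(G)$ for free solvable groups of finite rank in \cite{bfm,bm}. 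Everything else is formal Hirsch-length arithmetic resting on Lemma~\ref{l:1} and diagram~(\ref{e:graph1}).
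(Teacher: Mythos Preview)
Your route is sound but more elaborate than the paper's. The paper sidesteps all the Hirsch-length bookkeeping through diagram~(\ref{e:graph1}) by quoting \cite[Corollary~2.4]{bfm} directly: this already gives the explicit splitting $G\otimes G\simeq\mathbb{Z}^{r(r+1)/2}\times F'/[F,F^{(d)}]$. The periodicity hypothesis is then applied to the second factor, yielding $h(G\otimes G)=\tfrac{1}{2}r(r+1)$ at once, and the bound on $h(G)$ finishes the computation in one line. You instead derive $f(h(G))=\tfrac{1}{2}r(r-1)+h(M(G))$ and are left needing $h(M(G))=0$, which you flag as the ``real obstacle''. But note that your reading of the hypothesis (periodicity of $[G,G]=F'/F^{(d)}$) is weaker than what the paper actually uses (periodicity of the whole factor $F'/[F,F^{(d)}]$); under the paper's reading, Hopf's formula $M(G)=F^{(d)}/[F,F^{(d)}]\le F'/[F,F^{(d)}]$ immediately gives $h(M(G))=0$ with no module-theoretic detour. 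In short, the direct citation of the structure theorem from \cite{bfm}---which you yourself mention as an alternative and then set aside---is precisely the shortcut the paper takes, and it dissolves the obstacle you identify.
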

\begin{proof} We may apply \cite[Corollary 2.4]{bfm} and so $G\otimes G=\mathbb{Z}^{\f{1}{2}r(r+1)}\times
F'/[F,F^{(d)}]$. Lemma \ref{l:1} implies $h(G\otimes
G)=\frac{1}{2}r(r+1)$. Of course $h(G)\leq r$. Then
$f(h(G))\leq\f{1}{2}r(r+1)-r=\frac{1}{2}r(r-1)$, as claimed.
\end{proof}

\begin{cor}
Let $G$ be the free nilpotent group of rank $r\geq1$ and class $c\geq1$. If $G'$ is periodic, then $f(h(G))\leq
\frac{1}{2}r(r-1)$. In particular, if $h(G)=r$, then the equality holds and $f(h(G))=\f{1}{2}r(r+1)$.
\end{cor}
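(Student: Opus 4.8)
The plan is to proceed exactly in parallel with the free solvable case treated in the previous corollary, invoking the analogue of \cite[Corollary 2.4]{bfm} for free nilpotent groups (which is where \cite{bm} enters). First I would recall that for the free nilpotent group $G$ of rank $r$ and class $c$, the relevant result gives a decomposition of the form $G\otimes G=\mathbb{Z}^{\f{1}{2}r(r+1)}\times W$, where $W$ is a quotient of $G'$ built from the lower central structure (the precise shape of $W$ does not matter for the Hirsch length computation, only that it is a quotient of $G'$). Under the hypothesis that $G'$ is periodic, $W$ is periodic as well, so $h(W)=0$ and Lemma \ref{l:1} yields $h(G\otimes G)=\f{1}{2}r(r+1)$.

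Next I would bound $h(G)$. Since $G'$ is periodic, the abelianization $G^{ab}=\mathbb{Z}^r$ carries all the torsion-free rank, i.e. $h(G)=h(G^{ab})+h(G')=r+0=r$; more crudely, $h(G)\le r$ always, with equality precisely when $G'$ is periodic. Subtracting, $f(h(G))=h(G\otimes G)-h(G)=\f{1}{2}r(r+1)-h(G)\le\f{1}{2}r(r+1)-0$ in general, and $\le\f{1}{2}r(r+1)-r=\f{1}{2}r(r-1)$ once we use $h(G)=r$ — wait, but the claimed equality in the statement reads $f(h(G))=\f{1}{2}r(r+1)$, which only makes sense if $h(G)=0$, not $h(G)=r$; I would read this as a typo for $\f{1}{2}r(r-1)$ exactly as in the preceding free-solvable corollary, and prove $f(h(G))=\f{1}{2}r(r-1)$ under $h(G)=r$. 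So the computation is: $h(G)=r$ forces $f(h(G))=\f{1}{2}r(r+1)-r=\f{1}{2}r(r-1)$, and without that hypothesis one only gets the inequality $f(h(G))\le\f{1}{2}r(r-1)$ since $h(G)\le r$ and... hmm, actually if $h(G)<r$ then $f(h(G))>\f{1}{2}r(r-1)$. I would instead state the bound as $f(h(G))\le\f{1}{2}r(r-1)$ justified directly by $r=h(G^{ab})\le h(G)$ together with the fact that $h(G\otimes G)=\f{1}{2}r(r+1)$ does not depend on $c$.

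The only genuine input beyond bookkeeping is the structural decomposition $G\otimes G=\mathbb{Z}^{\f{1}{2}r(r+1)}\times W$ with $W$ a periodic quotient of $G'$; this is the free-nilpotent analogue of \cite[Corollary 2.4]{bfm} and is supplied by \cite{bm}. The $\mathbb{Z}^{\f{1}{2}r(r+1)}$ factor is $\nabla(G^{ab})$, of rank $\f{1}{2}r(r+1)$ because $G^{ab}=\mathbb{Z}^r$ and $\nabla(\mathbb{Z}^r)\simeq\Gamma(\mathbb{Z}^r)\simeq\mathbb{Z}^{\binom{r+1}{2}}$; the splitting is legitimate by Proposition \ref{p:bfm2}, since $G'$ has a complement in the free nilpotent $G$ only when $c=1$, but $G^{ab}=\mathbb{Z}^r$ has no elements of square order, so the hypothesis of Proposition \ref{p:bfm2} holds. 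Thus the main (and only) obstacle is citing the correct decomposition from \cite{bm}; everything else is the additivity of Hirsch length on extensions (Lemma \ref{l:1}) and the periodicity of $\Gamma(G^{ab})$ and $G'$. I would write:

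\begin{proof}
By \cite[Corollary 2.4]{bfm} (applied to the free nilpotent case, see also \cite{bm}) we have $G\otimes G=\mathbb{Z}^{\f{1}{2}r(r+1)}\times W$, where $W$ is a quotient of $G'$. Since $G^{ab}=\mathbb{Z}^r$ has no elements of square order, the splitting is justified by Proposition \ref{p:bfm2}. As $G'$ is periodic, so is $W$, hence Lemma \ref{l:1} gives $h(G\otimes G)=\f{1}{2}r(r+1)$. Moreover $r=h(G^{ab})\le h(G)\le r$ forces $h(G)=r$. Therefore $f(h(G))=h(G\otimes G)-h(G)=\f{1}{2}r(r+1)-r=\f{1}{2}r(r-1)$, and in general $f(h(G))\le\f{1}{2}r(r-1)$.
\end{proof}
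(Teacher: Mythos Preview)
Your approach is essentially the same as the paper's: invoke the structural decomposition of $G\otimes G$ from \cite{bfm}, use the periodicity of $G'$ to get $h(G\otimes G)=\frac{1}{2}r(r+1)$, and then subtract $h(G)$. The only differences are cosmetic: the paper cites \cite[Corollary~2.3]{bfm} (not~2.4), which already gives $G\otimes G=\mathbb{Z}^{\frac{1}{2}r(r+1)}\times G'$ directly without your separate appeal to Proposition~\ref{p:bfm2}, and then simply says ``the remainder is similar to the previous corollary''; your write-up is more explicit and correctly flags the typo in the stated equality.
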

\begin{proof} Note that nilpotent groups are solvable and so it is meaningful to consider $f(h(G))$.
Applying \cite[Corollary 2.3]{bfm}, $G\otimes G=\mathbb{Z}^{\f{1}{2}r(r+1)}\times
G'$ and the remainder is similar to the previous corollary.
\end{proof}

However, Lemma \ref{l:ramras} imposes the following question, which we leave open in its generality.

\begin{oq}
What is the growth of  $h(\Gamma \otimes \Gamma)$ with respect to
$h(\Gamma)$, where $\Gamma$ is an arbitrary extension of two abelian
groups $A$ and $Q$ as in Lemma \ref{l:ramras}?
\end{oq}


\subsection*{Acknowledgment}

The first and the second author are grateful to the Department of Mathematics and the Ibnu Sina Institute of the
Universiti Teknologi Malaysia for the hospitality in the summer of 2009, when the initial part of this
manuscript was written. We also thank Prof. B. Eick, who suggested \cite{en, eick},  Prof. R. F. Morse and Dr.
P. Moravec, who communicated some inaccuracies in the original version of the present paper. Finally, we
appreciated some email contributions of R. Brown, A. Caranti, D. Feirtenschlager, R. Hartung, M. Horn and D.
Ramras in 2010.


\end{document}